\DeclareMathOperator{\atan}{atan}
\newtheorem{thm}{Theorem}[section]
\newtheorem{prop}[thm]{Proposition}
\newtheorem{cor}[thm]{Corollary}
\newtheorem{lem}[thm]{Lemma}
\newtheorem{defn}[thm]{Definition}
\numberwithin{equation}{section}
\theoremstyle{remark}
\newtheorem{rmk}{Remark}[section]
\title{Input-to-State Stability in sup norms for hyperbolic systems with boundary disturbances}
\author[d]{Georges Bastin}
\author[c,e]{Jean-Michel Coron}
\author[a,b,e]{Amaury Hayat}
\affil[a]{CERMICS, Ecole des Ponts ParisTech, 6-8 Avenue Blaise Pascal, Champs-sur-Marne, France}
\affil[b]{Department of Mathematical Sciences and Center for Computational and Integrative Biology, Rutgers University–Camden, 303 Cooper St, Camden, NJ, USA}
\affil[c]{Sorbonne Université, Université de Paris, CNRS, Laboratoire Jacques-Louis Lions, Paris, France.}
\affil[d]{Department of Mathematical Engineering, ICTEAM, UCLouvain,
Louvain-La-Neuve, Belgium.}
\affil[e]{CAGE, INRIA, Paris, France.}
\date{\empty}
\begin{document}

\maketitle

\begin{abstract}
We give sufficient conditions for Input-to-State Stability in $C^{1}$ norm of general quasilinear hyperbolic systems with boundary input disturbances. In particular the derivation of explicit Input-to-State Stability conditions is discussed for the special case of $2\times 2$ systems.\\

\textbf{Keywords:} Input-to-State Stability; Lyapunov; hyperbolic systems; nonlinear; inhomogeneous systems.
\end{abstract}
\section{Introduction}
Hyperbolic systems are found everywhere in physical systems and sciences. From fluid dynamics to electromagnetism, cell growth, traffic transport, their ability to model propagation phenomena made them an unavoidable tool of many applications and led to hundreds of studies in the past decades. In most applications, one-dimensional quasilinear hyperbolic systems, around any steady state, can be written in the following form \cite{LiYu,BastinCoron1D,C1}:
\begin{equation}
\partial_{t}\mathbf{u}+A(\mathbf{u},x)\partial_{x}\mathbf{u}+B(\mathbf{u},x)=0, \;\;\; t \in [0, +\infty), \;\; x \in [0,L],
\label{sys1}
\end{equation}
\begin{equation}
\begin{pmatrix}
\mathbf{u}_{+}(t,0)\\
\mathbf{u}_{-}(t,L)
\end{pmatrix}=G\begin{pmatrix}
\mathbf{u}_{+}(t,L)\\
\mathbf{u}_{-}(t,0)
\end{pmatrix},
\label{bound}
\end{equation}
where 
\begin{itemize}
\item[(a)] $\mathbf{u}~: [0, +\infty) \times [0,L] \rightarrow \mathbb{R}^{n}$, 
\item[(b)] the maps $A$, $B$ and $G$ are $C^{1}$ and such that $A(0,x)=\Lambda(x)$ is diagonal, $B(0,x)=0$ and $G(0)=0$,
\item[(c)] the diagonal entries of $\Lambda(x)$ are denoted $\Lambda_i(x)$ and there exists $m \in \{1, \dots ,n\} $ such that, $\forall x \in [0,L]$, $\Lambda_{i}(x)>0$ for $i = 1,...,m$ and $\Lambda_{i}(x)<0$ for $i = m+1,...,n$, \textcolor{black}{and for any $i\neq j$, $\Lambda_{i}(x)\neq \Lambda_{j}(x)$}.
\item[(d)] $\mathbf{u}_+ \in \mathbb{R}^{m}$ and $\mathbf{u}_- \in \mathbb{R}^{n-m}$ are defined such that $\mathbf{u}^{\text{T}} = (\mathbf{u}_+^{\text{T}}, \mathbf{u}_-^{\text{T}})$. \textcolor{black}{Hence, $\mathbf{u}_{+}$ represent the components with positive propagation speeds and $\mathbf{u}_{-}$ the components with negative propagation speeds.}
\item[(e)] $\mathbf{u}(t,x) \equiv 0$, $\forall t \in [0, +\infty)$,  $\forall x \in [0,L]$, is the considered steady state.
\end{itemize}
The issue of the exponential stability of this \textcolor{black}{system} has attracted much attention in the last decades. The first result in the sup norm goes probably back to \cite{Li1984} in 1984 where Li and Greenberg studied an homogeneous system where $B=0$ and $G'(0)$ is diagonal, $m=1$ and $n=2$. This result was then generalized by \cite{1994-Li-book,Qin,Zhao,JCC,2010-Li-Rao-Wang-DCDS,CoronC1} to any $n\in\mathbb{N}^{*}$, and any $G$, but still with $B=0$. Inhomogeneous systems, when $B \neq 0$, were first treated in \cite{bastin2011coron, BastinCoron1D} in the $H^{2}$ norm which is easier to deal with, and then treated in the sup norms in \cite{C1,C1_22}. A more detailed review about these results and the main difficulties at each step of the generalization can be found in \cite[Section 1.6.1]{theseAH}.\\[0.8em]
In the present paper we address a slightly more general stability issue, namely the \textit{Input-to-State Stability} (ISS) of the system \eqref{sys1} when it is forced by a bounded boundary disturbance such that the boundary conditions are   
\begin{equation}
\begin{pmatrix}
\mathbf{u}_{+}(t,0)\\
\mathbf{u}_{-}(t,L)
\end{pmatrix}=G\begin{pmatrix}
\mathbf{u}_{+}(t,L)\\
\mathbf{u}_{-}(t,0)
\end{pmatrix}+\mathbf{d}(t),
\label{bound1}
\end{equation}
where $\mathbf{d}(t)\in\mathbb{R}^{n}$ is the boundary disturbance at time $t$. In this case the ISS measures the resilience of the system stability with respect to this disturbance or, in other words, how strongly the exponential stability of the steady state is changed by adding this disturbance. 
A precise definition is given in Definition \ref{defISS} below where it can be seen that
this ISS notion is more general since it implies the exponential stability of the steady state when the disturbance vanishes. The converse is false and the exponential stability of a system does not always imply its ISS and the existence of a Lyapunov function for a given steady state does not guarantee the ISS either as explained in \cite[Section 1.5 (C)]{KarafyllisKrstic}.\\[0.8em]
A natural question therefore arises: can the exponential stability results we mentioned above for system \eqref{sys1}--\eqref{bound} be extended to Input-to-State Stability for system \eqref{sys1}-\eqref{bound1}? In this article we will show that the answer is yes for the most up to date results, providing at the same time an improvement to the known ISS results in the sup norm.\\[0.8em]
The notion of ISS was first introduced by Sontag in 1989 \cite{Sontag1989} for finite dimensional systems. It was then extended to time delay systems, and then generalized to PDEs (see \cite[Chapter 1]{KarafyllisKrstic} for more details). In \cite[Part I-Part II]{KarafyllisKrstic},
 for instance, the authors give sufficient conditions for the ISS of a semilinear parabolic PDE or a linear hyperbolic PDE in the $L^{p}$ norm for any $p\in\mathbb{N}^{*}\cup\{+\infty\}$, including therefore the sup norm. In \cite{DashkovskiyMironchenko} the authors study ISS-Lyapunov functions and apply them to the ISS of semilinear reaction-diffusion equations for the $L^{2}$ norm.
  In \cite{Prieur2011} the authors study a linear parabolic system for the $L^{2}$ norm. In \cite{Prieur2012} the authors study a linear hyperbolic system with time varying coefficients and disturbances in the dynamics and for the $L^{2}$ norm. In \cite{Dashkovskiy} the authors show an ISS property for the semilinear wave equations for the sup norm, as well as a partial ISS property for the $L^{2}$ norm.
 In \cite{AP2016} the authors link the ISS for a nonlinear system in the $H^{p}$ norm to the behavior of storage functional, in \cite{Krsticmonotone} the authors link the ISS with the ISS with respect only to constant disturbances for monotonic nonlinear systems (which include parabolic PDEs with boundary disturbances). In \cite{yorgisbanda}, the authors show that the exponential stability results in the $H^{2}$ norm given in \cite{BastinCoron1D} can be extended under the same condition to ISS results (the linear case for the $L^{2}$ norm was shown in \cite{PrieurFerrante}). A more detailed review about the genesis of ISS notions for PDEs and some variations about the notion of ISS in infinite dimensional systems can be found in \cite[Chapter 1]{KarafyllisKrstic}. Some link with stability properties can also be found in \cite{MironchenkoWirth,Mironchenko2019}.
Other results about ISS have been developed in particular cases\textcolor{black}{:} in \cite{SaintVenantPI} is shown an ISS property for the Saint-Venant equations ; in \cite{Lhachemi2019} the authors study the ISS of a linear reaction-diffusion equation with a delay on the control input and a PI controller, etc.
But, to our knowledge, no general result exists in the sup norm.
In practice, however, the sup norms ($L^{\infty}$ or $C^{q}$ norms) are natural norms as, in physical systems, boundary disturbances are more likely to be uniformly bounded than to have a bounded $L^{p}$  or $W^{q,p}$ norm with $(p,q)\in(\mathbb{N}^{*})^{2}$. And from a mathematical point of view the $C^{1}$ norm is also the most natural norm for classical solutions of a quasilinear hyperbolic system. This is the problem we are investigating in this article.
In our main result, Theorem \ref{th1}, we give sufficient conditions to get ISS of general quasilinear hyperbolic systems
for the $C^{q}$ norm ($\textcolor{black}{q}\geq 1$), or the $L^{\infty}$ norm when the system is linear. To our knowledge, this is the first such general existing ISS result in sup norms for such systems.\\[0.8em]
The second part of the paper is devoted to the particular case of $2\times2$ systems of the following form
\begin{equation}
\partial_{t}\begin{pmatrix}u_{1}(t,x)\\u_{2}(t,x)\end{pmatrix}+A(\mathbf{u},x)\partial_{x}\begin{pmatrix}u_{1}(t,x)\\u_{2}(t,x)\end{pmatrix}+B(\mathbf{u},x)=0
\label{sys22}
\end{equation}
\begin{equation}
\text{ where }A(0,x)=\begin{pmatrix}\Lambda_{1}(x)&0\\0&\Lambda_{2}(x)\end{pmatrix}
\text{ and }\partial_{\mathbf{u}}B(0,x)=\begin{pmatrix}0&a(x)\\b(x)&0\end{pmatrix}.
\end{equation} 
Any quasilinear hyperbolic $2\times 2$ system can reduced to the form \eqref{sys22} (see \cite{bastin2011coron,Krsticsurvey} for instance).
These systems are interesting both from a practical and mathematical point of view. From a practical point of view they cover numerous physical systems in many areas from fluid mechanics (Euler Isentropic, Saint-Venant equations, etc.) to traffic flows (\cite{AwRascle, GaravelloVilla, CGARZ}), etc. From a mathematical point of view they represent the basic example of a coupled system that cannot be reduced to a homogeneous system.
As already mentioned, the most general known ISS results for \textcolor{black}{hyperbolic} systems deal with $2\times 2$ systems which are in addition linear, \textcolor{black}{and where $\Lambda_{1}$ and $\Lambda_{2}$ are constants}, and the state of the art is given in \cite[Chapter 9]{KarafyllisKrstic}.
We will show in Proposition \ref{th2} 
that our conditions provide an improvement to the previous conditions  when the system has \textcolor{black}{constant source term, i.e. $\partial_{\mathbf{u}}B(0,x)$ is constant}; and are necessary and sufficient when the system is homogeneous.

\section{Main results}
 
We consider the system \textcolor{black}{\eqref{sys1}, \eqref{bound1}. As stated in Theorem \ref{wp} herafter, this} system is well posed in $C^{1}$ (resp. $C^{q}$ for $q \geq 1$) for sufficiently small initial conditions satisfying the first order \textcolor{black}{(resp. $q$-th order)} compatibility conditions associated to \eqref{bound1} (see \cite{C1} or \cite[\textcolor{black}{(4.137)(4.142)}]{BastinCoron1D} for a precise definition of the first order compatibility condition). \\[0.8em]
Throughout the paper, the $C^{q}$ norm is denoted $\lVert\cdot \rVert_{C^{q}}$ and 
defined as follows for a function $\bm{\psi}=(\psi_{1},...,\psi_{n})^{T}\in C^{q}([0,L];\mathbb{R}^{n})$,
\begin{equation}
\lVert \bm{\psi} \rVert_{C^q}=\sum\limits_{k=0}^{q}\sup\limits_{i\in\{1,...n\}}\lVert \psi_{i}^{(k)} \rVert_{L^{\infty}}.
\end{equation}
\textcolor{black}{Also, for a vector $x=(x_{i})_{i\in\{1,...,k\}}$, the sup norm is denoted by $| x |=\max_{i}|x_{i}|$.}
We have the following theorem (see \cite{Wang}).

\begin{thm}[Well-posedness] \label{wp}
For all $T>0$ there exist $C_{\textcolor{black}{1}}(T)>0$ and $\delta(T)>0$ such that, for every $\mathbf{d}\in C^{1}([0,T])$, $\mathbf{u_{0}}\in C^{1}([0,L];\mathbb{R}^{n})$ satisfying the first order compatibility conditions and such that $\lVert \mathbf{u}_{0} \rVert_{C^1}+\lVert \mathbf{d} \rVert_{C^{1}}\leq \delta(T)$,
the system \eqref{sys1}, \eqref{bound1}, with $A$ and $B$ of class
$C^{1}$,
has a unique solution on $[0,T]\times[0,L]$ with initial condition $\mathbf{u_{0}}$. Moreover one has:
\begin{equation}
\lVert \mathbf{u}(t,\cdot)\rVert_{C^{1}}\leq C_{1}(T)\left( \lVert \mathbf{u}(0,\cdot)\rVert_{C^{1}}+\sup\limits_{\tau\in[0,t]}(\lvert \mathbf{d}(\tau)\rvert)+\sup\limits_{\tau\in[0,t]}(\lvert \mathbf{d}'(\tau)\rvert)\right),\text{ }\forall t\in[0,T].
\label{estimate}
\end{equation}
\label{th0}
\end{thm}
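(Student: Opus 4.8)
The plan is to reduce the quasilinear system \eqref{sys1}, \eqref{bound1} to diagonal (characteristic) form and then build the solution by a fixed-point iteration along characteristics, obtaining the quantitative bound \eqref{estimate} from a priori estimates and Gronwall's inequality. First I would exploit hypotheses (b)--(c): since $A(0,x)=\Lambda(x)$ is diagonal with eigenvalues $\Lambda_i(x)$ that are, by compactness of $[0,L]$, uniformly bounded away from one another and from $0$, for $|\mathbf{u}|$ small the matrix $A(\mathbf{u},x)$ stays strictly hyperbolic and, its eigenvalues being simple, admits $C^1$ eigenvalues $\lambda_i(\mathbf{u},x)$ together with a $C^1$ basis of left eigenvectors $\ell_i(\mathbf{u},x)$ (standard perturbation theory for simple eigenvalues of a $C^1$ matrix field). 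Differentiating $v_i=\ell_i(\mathbf{u},x)\cdot\mathbf{u}$ along the $i$-th characteristic curve $dx/ds=\lambda_i(\mathbf{u},x)$ puts the system in the form $\frac{d}{ds}v_i=f_i$, where the source $f_i$ collects $B$ and the variation of $\ell_i$, vanishes at $\mathbf{u}\equiv 0$ (using $B(0,x)=0$), and is controlled by $|\mathbf{u}|$ and $|\partial_x\mathbf{u}|$; meanwhile the boundary relation \eqref{bound1} decomposes into the incoming characteristics at $x=0$ (for $i\le m$) and at $x=L$ (for $i>m$), each expressed through the outgoing ones and $\mathbf{d}(t)$, with $G(0)=0$ making this coupling a small perturbation.

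Second, I would set up the iteration. Given an approximation $\mathbf{u}^{(k)}$ in a small closed ball of $C^1([0,T_0]\times[0,L])$, freeze the speeds $\lambda_i(\mathbf{u}^{(k)},x)$, integrate the scalar equations along the resulting characteristic curves using the initial datum $\mathbf{u}_0$ and the boundary values produced by \eqref{bound1}, and call the result $\mathbf{u}^{(k+1)}$. For $T_0$ small the characteristics foliate the strip, the first-order compatibility conditions guarantee that $\mathbf{u}^{(k+1)}$ is $C^1$ up to the corners $(0,0)$ and $(0,L)$, and one shows the map $\mathbf{u}^{(k)}\mapsto\mathbf{u}^{(k+1)}$ is a contraction in the $C^1$ norm. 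This yields local existence and, applying the same estimate to the difference of two solutions, uniqueness.

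Third, to obtain \eqref{estimate} and to continue the solution up to the prescribed $T$, I would derive a priori $C^1$ estimates. Differentiating the equations, the quantities $\partial_x v_i$ (interchanged with $\partial_t v_i$ on the boundary via the PDE itself) satisfy transport equations whose sources are linear in the first derivatives with coefficients controlled by $\mathbf{u}$; integrating along characteristics and applying Gronwall gives a bound whose constant grows exponentially with $T$, which is the origin of $C_1(T)$. Differentiating the boundary condition \eqref{bound1} in $t$ and converting $\partial_t\mathbf{u}$ into $\partial_x\mathbf{u}$ at the boundary via \eqref{sys1} produces the term $\sup_{\tau}|\mathbf{d}'(\tau)|$, while the zeroth-order part contributes $\sup_{\tau}|\mathbf{d}(\tau)|$; together with $\lVert\mathbf{u}(0,\cdot)\rVert_{C^1}$ these are exactly the three quantities on the right-hand side of \eqref{estimate}. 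Choosing $\delta(T)$ small enough that this bound keeps $\lVert\mathbf{u}(t,\cdot)\rVert_{C^1}$ in the small-data regime throughout $[0,T]$ lets the nonlinear contributions be absorbed into the linear estimate and lets the local solution be continued to all of $[0,T]$.

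The main obstacle I expect is the boundary analysis for the first derivatives: ensuring genuine $C^1$ regularity up to the corners (which is precisely where the first-order compatibility conditions are indispensable), and closing the Gronwall estimate despite the two-sided coupling of incoming and outgoing characteristics through $G$ at both ends. Controlling the geometry of the solution-dependent characteristics over the full horizon $[0,T]$ — so that they neither focus nor leave the strip prematurely — is likewise delicate, and is exactly what forces $\delta(T)$ to depend on $T$. This program is the $C^1$ characteristic theory of Li Ta-tsien adapted to nonautonomous, disturbance-dependent boundary data, and the complete argument is the one carried out in \cite{Wang}.
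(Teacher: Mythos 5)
The paper gives no proof of Theorem \ref{wp}; it simply cites \cite{Wang}, and your outline is precisely the standard Li Ta-tsien--style characteristic-method argument (diagonalization by left eigenvectors, iteration along characteristics, $C^1$ a priori estimates via Gronwall, with the compatibility conditions handling the corners) that the cited reference carries out. Your sketch is correct and coincides with the approach the paper relies on, so there is nothing further to compare.
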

\begin{rmk}
When the maps $A$, $B$ and $G$ are of class $C^q$, this theorem can be generalized to the $C^q$ norm for any $q \geq 1$, by considering the augmented system $(\mathbf{u},\partial_{t}\mathbf{u},...,\partial_{t}^{q-1}\mathbf{u})$. In this case the right-hand side of the estimate \ref{estimate} includes the  derivatives of $\mathbf{d}$ up to order $q$. Besides, when the system is semilinear, i.e. $A(\mathbf{u},x)=A(x)$, this theorem holds also for the $C^{0}$ norm for $\mathbf{u}$ and the $L^{\infty}$ norm for $\mathbf{d}$.
\end{rmk}
We now introduce the definition of Input-to-State Stability,
\begin{defn}
We say that a system of the form \eqref{sys1}, \eqref{bound1} is (strongly) Input-to-State Stable (or ISS) with fading memory for the $C^{q}$ norm if there exist positive constants $C_{1}>0$, $C_{2}>0$, $\gamma>0$, and $\delta>0$ such that, for any $T>0$, for any $\mathbf{u_{0}}\in C^{q}([0,L];\mathbb{R}^{n})$ and for any $\mathbf{d} \in C^{q}([0,T];\mathbb{R}^{n})$ satisfying the $q$-th order compatibility conditions,  $\lVert \mathbf{u}_{0} \rVert_{C^{q}}\leq \delta $ and $\lVert \mathbf{d}\rVert_{C^{q}}\leq\delta $,
\begin{equation}
\lVert \mathbf{u}(t,\cdot)\rVert_{C^{q}}\leq C_{1}e^{-\gamma t}\lVert \mathbf{u}_{0}\rVert_{C^{q}}
+C_{2}\left(\sum\limits_{k=0}^{q}\sup\limits_{\tau\in[0,t]}\left(e^{-\gamma(t-\tau)}|\mathbf{d}^{(k)}(\tau)| \right)\right),
\label{ISS}
\end{equation}
\label{defISS}
\end{defn}
Note the fading-memory factor $e^{-\gamma(t-\tau)}$ in the last term which makes our definition of ISS slightly more strict than the usual definitions. For weaker notions of ISS, one can look for instance at \cite{KarafyllisKrstic} or \cite{MironchenkoWirth}.\\[0.8em]
In this article our major contribution is to show that the sufficient conditions derived in \cite{CoronC1,C1} for the exponential stability of quasilinear hyperbolic systems can be extended to the (strong) Input-to-State Stability of these systems. For the sake of clarity, in the next subsection, we start with the special case of homogeneous systems for which $B =0$. The general case will be considered next.

\subsection{The homogeneous case}
Let us first study the special case of homogeneous systems in which $B=0$. In this case the system \eqref{sys1} becomes
\begin{equation} 
\partial_{t}\mathbf{u}+A(\mathbf{u},x)\partial_{x}\mathbf{u}=0, \;\;\; t \in [0, +\infty), \;\; x \in [0,L],
\label{sys10}
\end{equation}
with boundary conditions \eqref{bound1}.\\[0.8em]
We recall the definition of the function $\rho_k~: \mathcal{M}_{n} \rightarrow \mathbb{R}$ which was already considered in
\cite[(2.7)]{1994-Li-book}, \cite[(1.4)]{CoronC1}, and \cite[(1.18)]{CoronNguyen}, and which is intrinsically linked to the stability of homogeneous systems in $C^{q}$ \textcolor{black}{with boundary conditions of the form \eqref{bound}}:
\begin{equation}
\rho_k(K)=\inf\{\lVert \Delta K \Delta^{-1} \rVert_k~: \Delta\in \mathcal{D}_{n}^{+}\}
\label{defrho1}
\end{equation}
where $\mathcal{M}_{n}$ is the space of $n\times n$ real matrices, $\mathcal{D}_{n}^{+}$ is the space of diagonal matrices with strictly positive diagonal entries, and
\textcolor{black}{
\begin{equation}
 \label{nrmminfty}
\| M\|_k  = \max_{\|\mathbf{\xi} \|_k = 1} \|M \mathbf{\xi}
\|_k \;\;  \forall M \in \mathcal{M}_{n},\;\; k\in\mathbb{N}^{*}\cup\{+\infty\},
\end{equation}
with
\begin{equation}
\label{normxinfty}
\| \mathbf{\xi}\|_k=\left(\sum\limits_{i=0}^{n}\xi_{i}^{k}\right)^{1/k}\;\text{ for }k\in\mathbb{N}^{*},\;\;\| \mathbf{\xi}\|_\infty=\max\left\{|\xi_i|;\, i\in
\{1,\cdots,n\}\right\}, \;\;  \forall 
\mathbf{\xi}=(\xi_1,\ldots,\xi_n)^{T}
\in \mathbb{R}^n.
\end{equation}}
We have the following ISS theorem.
\begin{thm} \label{th10}
Let an homogeneous quasilinear hyperbolic system be of the form \eqref{sys10}, \eqref{bound1}, with $A$ and $G$ of class $C^{q}$, with $q\in\mathbb{N}^{*}$. If
\begin{equation}
\rho_{\infty}(G'(0))<1,
\label{condrho1}
\end{equation}
then the system is Input-to-State Stable for the $C^{q}$ norm.
\end{thm}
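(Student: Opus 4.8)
The plan is to establish the $C^{0}$-type estimate by the method of characteristics, recast the boundary dynamics as a renewal (time-delay) inequality whose gain is controlled by $\rho_\infty(G'(0))$, and then recover the full $C^{q}$ estimate by running the same argument on the augmented system $(\mathbf{u},\partial_t\mathbf{u},\dots,\partial_t^{q-1}\mathbf{u})$ described in the Remark following Theorem~\ref{th0}. First I would exploit the defining infimum \eqref{defrho1}: since $\rho_\infty(G'(0))<1$, I can fix $\Delta\in\mathcal{D}_n^+$ with $\lVert \Delta G'(0)\Delta^{-1}\rVert_\infty=:\rho<1$. The diagonal change of unknown $\mathbf{u}\mapsto\Delta\mathbf{u}$ preserves the form \eqref{sys10}, \eqref{bound1} (it only conjugates $A$ and $G$ and replaces $\mathbf{d}$ by $\Delta\mathbf{d}$, which changes every norm in \eqref{ISS} by a fixed factor), so there is no loss of generality in assuming $\lVert G'(0)\rVert_\infty=\rho<1$ from the outset.

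Next I would set up the characteristics. For solutions with $\lVert\mathbf{u}\rVert_{C^1}$ small the eigenvalues $\lambda_i(\mathbf{u},x)$ of $A(\mathbf{u},x)$ stay close to $\Lambda_i(x)$ and keep their signs by assumption~(c), so the $i$-th characteristic curves are well defined and cross $[0,L]$ in transit times bounded by some $\tau_{\max}$. Tracing a characteristic backward from $(t,x)$, it either reaches the initial line $t=0$ or an endpoint $x\in\{0,L\}$; along it the homogeneity $B=0$ forces $l_i(\mathbf{u},x)\,\tfrac{d\mathbf{u}}{dt}=0$ for the $i$-th left eigenvector $l_i$, so the only variation of the characteristic component comes from the $\mathbf{u}$- and $x$-dependence of $l_i$, a defect that is quadratic in $(\mathbf{u},\partial_x\mathbf{u})$ and hence $O(\delta)$ times the $C^1$ norm. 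Because space derivatives are controlled by time derivatives through $\partial_x\mathbf{u}=-A(\mathbf{u},x)^{-1}\partial_t\mathbf{u}$, I would carry out this bookkeeping on the augmented system so that all derivatives up to order $q$ are estimated at once; differentiating \eqref{bound1} in $t$ reproduces $G'(0)$ as the leading reflection matrix on each block and injects $\mathbf{d}^{(k)}$, $k\le q$, into the boundary data. Writing $Z(t):=\sum_{k=0}^{q}\sup_x|\partial_t^k\mathbf{u}(t,x)|$ and $D(t):=\sum_{k=0}^{q}|\mathbf{d}^{(k)}(t)|$, combining the transport along characteristics with the linearized boundary relation yields, for $\delta$ small enough, a constant $C>0$ and an effective gain $\rho'\in(\rho,1)$ with
\begin{equation*}
Z(t)\le \rho'\sup_{s\in[t-\tau_{\max},\,t]}Z(s)+C\sup_{s\in[t-\tau_{\max},\,t]}D(s),\qquad t\ge\tau_{\max},
\end{equation*}
the gap $\rho'-\rho=O(\delta)$ being exactly the room left by the strict inequality $\rho<1$ to absorb the quasilinear defect.

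From here a Halanay/renewal argument closes the estimate: iterating the inequality over windows of length $\tau_{\max}$, the initial-data contribution decays like $(\rho')^{\lfloor t/\tau_{\max}\rfloor}$, i.e. like $e^{-\gamma t}$ with $\gamma=-\ln\rho'/\tau_{\max}>0$, while the contribution of $D(\tau)$ is multiplied by the number of windows separating $\tau$ from $t$, producing precisely the fading-memory factor $e^{-\gamma(t-\tau)}$ of \eqref{ISS}. Controlling $Z$ and $D$ on the initial window $[0,\tau_{\max}]$ by Theorem~\ref{th0} and translating back through $\Delta$ then gives \eqref{ISS} in the $C^{q}$ norm.

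The hard part, I expect, is \emph{not} the linear bookkeeping but making the whole scheme global in time with a threshold $\delta$ independent of $T$: Theorem~\ref{th0} only furnishes solutions up to a fixed $T$ for data below $\delta(T)$, whereas Definition~\ref{defISS} demands a single $\delta$ valid for all $T$. I would resolve this by a continuation/bootstrap: let $T^\ast$ be the largest time on which $\lVert\mathbf{u}(t,\cdot)\rVert_{C^q}$ stays below the smallness threshold used to derive the renewal inequality; on $[0,T^\ast]$ the estimate above is licit and, for $\delta$ small, returns a bound on $\lVert\mathbf{u}\rVert_{C^q}$ strictly smaller than that threshold, which by the local theory forbids $T^\ast$ from being finite. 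The two genuinely delicate points inside this loop are (i) keeping $C$ and the gain $\rho'$ uniform in time while the $O(\delta)$ quasilinear corrections are folded into $\rho'<1$, and (ii) extracting the fading-memory factor rather than a mere $\sup_{\tau\le t}|\mathbf{d}^{(k)}(\tau)|$, which is exactly what forces the window-by-window iteration above.
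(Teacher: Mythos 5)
Your route is genuinely different from the paper's. The paper does not use characteristics at all: it builds an ISS--Lyapunov function for the $C^1$ norm by approximating the weighted sup-norm functional $V$ in \eqref{defV} with the $L^{2p}$-type functionals $W_p=W_{1,p}+W_{2,p}$ of \eqref{defW1p}--\eqref{defWp}, splits the boundary term $I_2$ with the elementary inequality \eqref{estimad} so that the reflection part is absorbed by $\rho_\infty(G'(0))<1$ and only a pure $d_{\max}^{2p}$ term survives, integrates the resulting inequality for $W_p^{2p}$ by Gronwall, and finally lets $p\to\infty$ using the concavity of $x\mapsto x^{1/2p}$ to recover the fading-memory sup in \eqref{ISSth1}; a noteworthy feature (Remark \ref{rmkdiffeq}) is that the usual ISS differential inequality \eqref{estimateVuse} is \emph{not} available and the estimate is obtained at the level of $W_p^{2p}$ instead. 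Your plan is instead the classical characteristics/small-gain scheme of Li--Qin--Zhao adapted to a forced boundary, closed by a Halanay/renewal iteration. The skeleton is sound: the diagonal rescaling by $\Delta$, the fact that the quasilinear defect along characteristics is $O(\delta)$ times the $C^1$ norm (because the eigenvectors of $A(0,x)=\Lambda(x)$ are constant), the one-reflection renewal inequality, and the geometric summation that produces the factor $e^{-\gamma(t-\tau)}$ all fit together, and you correctly isolate the two delicate points (uniformity of $\rho'<1$ in time, and the continuation argument giving a $\delta$ independent of $T$). What your approach buys is the avoidance of the whole $p\to\infty$ machinery; what the paper's approach buys is that the reflection estimate is done once, algebraically, inside $I_2$, rather than through a bookkeeping of characteristics and reflection times — which is exactly why the authors state that the characteristics route ``might be hard to adapt to the ISS case.''

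Two cautions. First, the entire content of the theorem in your scheme is the displayed renewal inequality for $Z(t)$, and you assert it in one sentence; to make the argument complete you must actually trace each component back to its entering endpoint, control the accumulated $O(\delta)\sup Z$ error over one transit, and verify that the differentiated boundary relation contributes $G'(\mathbf{u}_{\mathrm{out}})=G'(0)+O(\delta)$ on every block of the augmented system — this is the analogue of the paper's passage from \eqref{I22} to \eqref{I25b} and is where all the work lives. Second, your phrase that the contribution of $D(\tau)$ is ``multiplied by the number of windows separating $\tau$ from $t$'' is not what the iteration gives (that would destroy the estimate); it is multiplied by $(\rho')^{\#\text{windows}}$, and one must then bound $\sum_j(\rho')^{k-j}D_j$ by a fading-memory sup at a slightly degraded rate. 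Your conclusion (the factor $e^{-\gamma(t-\tau)}$) indicates you intend the correct geometric decay, but the sentence as written should be fixed.
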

The proof of this theorem is given in Section \ref{sectionproofth10}.
\begin{rmk}[Computing the values of the ISS gains]
The gains $C_{1}$ and $C_{2}$ in the ISS estimate \eqref{ISS} obtained by Theorem \ref{th10} can be expressed explicitly as a function of any matrix $\Delta$ such that $\lVert \Delta G'(0)\Delta\rVert_{\infty}<1$ (which exists from Condition \eqref{condrho1})
and the system parameters (see \ref{rmkconstants}).
\end{rmk}
Simple extensions of Theorem \ref{th10} are given in the two following remarks.
\begin{rmk}[Particular case of semilinear systems]\label{rmk1}
If the system \eqref{sys10} is semilinear (i.e. $A(\mathbf{u},x)=A(x)$), then Theorem \ref{th10} also holds true for $q=0$.
This is shown in Appendix \ref{semilinear}.
\end{rmk}
\begin{rmk}[Internal disturbances] \label{rmk-intern-disturb}
One could \textcolor{black}{also} include an internal distributed disturbance $\mathbf{d}_{2}(t,x)\in C^{q}([0,T];C^{0}([0,L];\mathbb{R}^{n}))$. The system \eqref{sys10} then becomes
\begin{equation}
\partial_{t}\mathbf{u}+A(\mathbf{u},x)\partial_{x}\mathbf{u}=\mathbf{d}_{2}(t,x), \;\;\; t \in [0, +\infty), \;\; x \in [0,L],
\end{equation}
and the same result holds with an ISS estimate rewritten as
\textcolor{black}{
\begin{equation}
\begin{split}
\lVert &\mathbf{u}(t,\cdot)\rVert_{C^{q}}\leq C_{1}e^{-\gamma t}\lVert \mathbf{u}_{0}\rVert_{C^{q}}
+ C_{2}\left(\sum\limits_{k=0}^{q}\sup\limits_{\tau\in[0,t]}\left(e^{-\gamma(t-\tau)}|\mathbf{d}^{(k)}(\tau)|\right)\right)\\
&+ C_{3}\left(\sup\limits_{(\tau,x)\in[0,t]\times[0,L]}\left(e^{-\gamma(t-\tau)}|\partial_{t}^{q}\mathbf{d}_{2}(\tau,x)|\right)+\sum\limits_{k_{1}+k_{2}\leq q-1}\sup\limits_{(\tau,x)\in[0,t]\times[0,L]}\left(e^{-\gamma(t-\tau)}|\partial_{t}^{k_{1}}\partial_{x}^{k_{2}}\mathbf{d}_{2}(\tau,x)|\right)\right),
\end{split}
\label{ISSdist}
\end{equation}}
instead of \eqref{ISS}. A way to adapt the proof is given in Appendix \ref{internal}.
\end{rmk}
Before to consider the inhomogeneous case in the next subsection, it is still interesting to point out the two following methodological remarks.
\begin{rmk}
First let us note that Condition \eqref{condrho1} is exactly the same as the sufficient condition that was given in \cite{1994-Li-book, Qin, Zhao} and \cite{CoronC1} for the exponential stability (in $C^{q}$) of the unforced system \eqref{sys10}, \eqref{bound} (i.e. without disturbance). 
\textcolor{black}{In the references \cite{Qin,Zhao,1994-Li-book}, the result relies on a careful estimate of the solutions and their derivatives along characteristics which might be hard to adapt to the ISS case.
In constrast,} in the reference \cite{CoronC1}, the exponential stability relies on a Lyapunov function equivalent to a sup-norm. In Theorems \ref{th10} and \ref{th1}, we shall show that the same Lyapunov function can be used as a so-called ISS Lyapunov function to extend the ISS property to the system \eqref{sys10}, \eqref{bound1} (i.e. in presence of the boundary disturbance). However, it should be noted that this extension is not as straightforward as one might think. The reason is that, in the ISS framework, it will appear that we are not able to get the usual differential inequality of the standard Lyapunov theory
\begin{equation}
\frac{dV(\mathbf{u}(t,\cdot))}{dt}\leq -C V(\mathbf{u}(t,\cdot))+ \gamma (\sum\limits_{k=0}^{q}\sup\limits_{\tau\in[0,t]}|\mathbf{d}^{(k)}(\tau)|),
\label{usualLyap}
\end{equation}
where $V$ denotes the Lyapunov function, $C$ is a positive constant and $\gamma$ is a class $\mathcal{K}$ function. In Sections \ref{sectionISS-Lyap-func} and \ref{sectionproofth10} we will see how it is possible to adapt the analysis to nevertheless prove Theorem \ref{th10} and get an estimate of the form \eqref{ISS} (see in particular \eqref{dWp}--\eqref{estimdWp2}).
\end{rmk}

\begin{rmk}
It is also worth noting that \eqref{condrho1} is only a sufficient condition. It is hard to decide whether this condition could be necessary or not, and if not, what would be the necessary condition. In fact, even for the exponential stability of the unforced system (i.e. without disturbances), this question remains unsolved.
The difficulty comes from the fact that, for nonlinear systems like \eqref{sys10}, the stability conditions in different norms are not equivalent. 
To clarify this point,
let us define ISS for the $H^{2}$ norm as follows.
\end{rmk}
\begin{defn}
\label{defISSH2}
We say that a system of the form \eqref{sys10}, \eqref{bound1} is (strongly) ISS with fading memory for the $H^{2}$ norm if there exist positive constants $C_{1}>0$, $C_{2}>0$, $\gamma>0$, and $\delta>0$ such that, for any $T>0$, for any $\mathbf{u_{0}}\in H^{2}([0,L];\mathbb{R}^{n})$ and for any $\mathbf{d} \in C^{2}([0,T];\mathbb{R}^{n})$ satisfying first order compatibility conditions,  $\lVert \mathbf{u}_{0} \rVert_{H^{2}}\leq \delta $ and $\lVert \mathbf{d}\rVert_{C^{2}}\leq\delta $,
\begin{equation}
\lVert \mathbf{u}(t,\cdot)\rVert_{H^2} \leq C_{1}e^{-\gamma t}\lVert \mathbf{u}_{0}\rVert_{H^{2}}
+C_{2}\left(\sum\limits_{k=0}^{2}\sup\limits_{\tau\in[0,t]}\left(e^{-\gamma(t-\tau)}|\mathbf{d}^{(k)}(\tau)|\right)\right).
\label{ISSH2}
\end{equation}
\end{defn}
With this definition, we have the following sufficient condition for ISS in the $H^{2}$ norm.
\begin{prop}
Let an homogeneous quasilinear hyperbolic system be of the form \eqref{sys10}, \eqref{bound1}, with $A$ and $G$ of class $C^{2}$. If
\begin{equation}
\rho_{2}(G'(0))<1,
\label{condrho2} 
\end{equation}
then the system is ISS for the $H^{2}$ norm.
\end{prop}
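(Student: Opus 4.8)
The plan is to follow the quadratic ISS-Lyapunov approach used for $H^2$ stability of homogeneous quasilinear systems, upgrading the exponential-stability argument of \cite{BastinCoron1D} to the forced setting exactly as in the unforced-to-forced passage carried out in \cite{yorgisbanda}. First I would exploit Condition \eqref{condrho2}: since $\rho_2(G'(0))<1$, there is a diagonal matrix $\Delta=\mathrm{diag}(\delta_1,\dots,\delta_n)$ with strictly positive entries such that $\lVert\Delta G'(0)\Delta^{-1}\rVert_2<1$, and the squared entries $\delta_i^2$ serve as the base weights. Because the system \eqref{sys10} is homogeneous and quasilinear, time-differentiation preserves its hyperbolic structure: $\partial_t\mathbf u$ and $\partial_t^2\mathbf u$ solve linear hyperbolic systems with the same principal part $A(\mathbf u,x)$ plus lower-order terms, and the boundary condition \eqref{bound1} differentiated $k$ times reads $(\partial_t^k\mathbf u)_{\mathrm{in}}=G'(0)(\partial_t^k\mathbf u)_{\mathrm{out}}+\mathbf d^{(k)}+(\text{lower order})$ for $k=0,1,2$. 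I would therefore take $V=\sum_{k=0}^2 V_k$ with
\[
V_k=\int_0^L \sum_{i=1}^n \frac{\delta_i^2}{|\Lambda_i(x)|}\,e^{-\mu\,\sgn(\Lambda_i)\,x}\,(\partial_t^k u_i)^2\,dx,
\]
for a small parameter $\mu>0$ to be fixed; for solutions that remain small in $H^2$, $V$ is equivalent to $\lVert\mathbf u\rVert_{H^2}^2$, using the equation to trade the time derivatives $\partial_t^k\mathbf u$ against the space derivatives appearing in the $H^2$ norm.

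Next I would differentiate $V$ along trajectories and integrate by parts. The exponential weight yields an interior term bounded by $-\mu V$, leaving boundary quadratic forms at $x=0$ and $x=L$. Substituting the differentiated boundary conditions, the leading (homogeneous) boundary contribution is governed by $\lVert\Delta G'(0)\Delta^{-1}\rVert_2^2\,e^{\mu L}-1$, which is negative for $\mu$ small; once $\lVert\mathbf u\rVert_{H^2}$ is small enough to absorb the quasilinear/lower-order boundary and interior terms, the homogeneous part is negative definite. The disturbance and its derivatives $\mathbf d^{(k)}$ enter only through these boundary forms, and a Young inequality splits the cross terms into a piece absorbed by the negative definite boundary term and a remainder bounded by $C\sum_{k=0}^2|\mathbf d^{(k)}(t)|^2$. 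This produces a genuine differential inequality $\dot V\le -2\gamma V + C\sum_{k=0}^2|\mathbf d^{(k)}(t)|^2$ — note that, unlike the $C^q$ case of Theorem \ref{th10}, the quadratic structure does permit the standard Lyapunov inequality here.

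Finally I would integrate. Multiplying by $e^{2\gamma t}$ and applying Gr\"onwall gives
\[
V(t)\le e^{-2\gamma t}V(0)+C\int_0^t e^{-2\gamma(t-s)}\sum_{k=0}^2|\mathbf d^{(k)}(s)|^2\,ds,
\]
and since $(e^{-\gamma(t-s)/2}|\mathbf d^{(k)}(s)|)^2=e^{-\gamma(t-s)}|\mathbf d^{(k)}(s)|^2$ while $\int_0^t e^{-\gamma(t-s)}\,ds\le 1/\gamma$, the convolution is bounded by $\tfrac{C}{\gamma}\sum_k(\sup_{s\le t}e^{-\gamma(t-s)/2}|\mathbf d^{(k)}(s)|)^2$. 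Taking square roots and using $V\simeq\lVert\mathbf u\rVert_{H^2}^2$ yields exactly the ISS estimate \eqref{ISSH2} (after relabelling $\gamma/2\to\gamma$). Theorem \ref{wp}, in its $H^2/C^2$ form, supplies existence on $[0,T]$ for small data, and together with a standard continuation/bootstrap argument it justifies the a priori smallness of $\lVert\mathbf u\rVert_{H^2}$ used throughout.

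The main obstacle I expect is twofold. First, establishing and maintaining the equivalence between $V$, which is built on the time derivatives $\partial_t^k\mathbf u$, and the $H^2$ norm, which is built on space derivatives: for the quasilinear system this rests on the smallness of the solution and on a careful use of the equation at second order to convert $\partial_t^2\mathbf u$ into the relevant space derivatives. Second, keeping the boundary quadratic forms negative definite after the disturbance derivatives $\mathbf d'$ and $\mathbf d''$ have been inserted through the differentiated boundary conditions; this is precisely where the strict inequality \eqref{condrho2}, the Young-inequality bookkeeping, and the choice of $\mu$ must be balanced against one another.
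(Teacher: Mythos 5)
Your proposal is correct and follows essentially the same route the paper indicates: the paper dispatches this proposition in one line by invoking the standard quadratic Lyapunov function of \cite{2008-Coron-Bastin-Andrea-Novel-SICON} (extended to the forced case as in \cite{yorgisbanda}), noting that here one does obtain the classical differential inequality \eqref{usualLyap} and concludes by Gr\"onwall. Your write-up is simply a fleshed-out version of that same argument, with the weights $\delta_i^2$ coming from a matrix $\Delta$ realizing $\rho_2(G'(0))<1$, the exponential weight $e^{-\mu\,\sgn(\Lambda_i)x}$, and the Young-inequality treatment of the boundary disturbance terms.
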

This proposition can be easily proved with a standard quadratic Lyapunov function \textcolor{black}{similar to the one used in} \cite{2008-Coron-Bastin-Andrea-Novel-SICON} (see also \cite{yorgisbanda} \textcolor{black}{for linear systems in the $L^{2}$ norm}). \textcolor{black}{This case is easier than the one we are dealing with in this article as one then obtains a classical Lyapunov estimate of the form \eqref{usualLyap}.}\\[0.8em] The point here is that (see \cite[Proposition 4.7]{BastinCoron1D}) 
\begin{equation}
	\rho_{2}(G'(0))\leq \rho_{\infty}(G'(0))
\end{equation}
and, furthermore, that there are systems of the form \eqref{sys10}, \eqref{bound1} for which this inequality is strict. From \cite[Theorem 2]{CoronNguyen}, we even know that, \textcolor{black}{for any $\epsilon > 0$,} there are systems such that $\rho_{2}(G'(0)) < 1 < \rho_{\infty}(G'(0)) \leq (1 + \epsilon)$ which are ISS for the \textcolor{black}{$H^{2}$} norm but \textcolor{black}{are Input-to-State} unstable for the $C^1$ norm. 

\subsection{The inhomogeneous case}
Let us now consider the general case, where $B\neq0$. In other words the system is inhomogeneous and has a source term. From a stability point of view, this changes the problem a lot. Indeed, the source term can strongly couple the equations~: while for the homogeneous case the system can be diagonalized such that the equations of the linearized system are coupled through the boundary conditions only, this cannot be done anymore when $B\neq0$. For the exponential stability it was shown in \cite{C1} that the the stability conditions of the homogeneous case can be generalized to inhomogeneous systems when $B\neq 0$, but an additional internal condition appears on the length of the domain $[0,L]$ \textcolor{black}{or equivalently} the magnitude of the source term (see also \cite[Proposition 5.12, Theorem 6.6]{BastinCoron1D}). In this paper, we shall see that similar limitations appear for ISS.\\[0.8em]
Theorem \ref{th10} can be generalized as follows.
\begin{thm}
Let a quasilinear hyperbolic system be of the form \eqref{sys1} with $A$ and $B$ of class $C^{q}$, with $q\in\mathbb{N}^{*}$. Let us denote $M(x)=\partial_{\mathbf{u}}B(0,x)$. Let us assume that the system
\begin{equation}
\Lambda_{i}(x)f_{i}'(x)\leq -2\left(-M_{ii}(x)f_{i}(x) + \sum\limits_{k=1,k\neq i}^{n}\lvert M_{ik}(x)\rvert \frac{f_{i}^{3/2}(x)}{\sqrt{f_{k}(x)}} \right),
\label{01cond111}
\end{equation}
has a solution $(f_{1},...,f_{n}):[0,L] \rightarrow \mathbb{R}^{\textcolor{black}{n}}$  such that $f_{i}(x) >0$ for all $i\in[1,n]$ and all $x \in [0,L]$ and there exists a diagonal matrix $\Delta$ with positive coefficients such that
\begin{equation}
\lVert \Delta G'(0)\Delta^{-1}\rVert_{\infty}<\frac{\inf_{i}\left(\frac{f_{i}(l_i)}{\Delta_{i}^{2}}\right)}{\sup_{i}\left(\frac{f_{i}(L-l_{i})}{\Delta_{i}^{2}}\right)},
\label{01condauxbords}
\end{equation}
where
$l_{i}=L$ if $\Lambda_{i}>0$ and $l_{i}=0$ otherwise.
Then the system \eqref{sys1}, \eqref{bound1} is Input-to-State Stable for the $C^{q}$ norm.
\label{th1}
\end{thm}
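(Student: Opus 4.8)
The plan is to run the $L^{2p}$-approximation-of-the-sup-norm strategy that underlies Theorem \ref{th10}, but with the constant weights replaced by the space-dependent weights $f_i(x)$ that the coupling source $M(x)=\partial_{\mathbf{u}}B(0,x)$ forces on us. First I would reduce the $C^{q}$ claim to an $L^{\infty}$-type claim for the augmented state $\mathbf{U}=(\mathbf{u},\partial_{t}\mathbf{u},\dots,\partial_{t}^{q}\mathbf{u})$. Differentiating \eqref{sys1} $j$ times in $t$ shows that each block $\partial_{t}^{j}\mathbf{u}$ solves a hyperbolic system with the \emph{same} characteristic speeds $\Lambda_i(x)$, whose linearisation has the same internal coupling $M(x)$ at top order and the same boundary operator $G'(0)$ (now forced by $\mathbf{d}^{(j)}$); the extra terms produced by differentiating $A(\mathbf{u},x)$ and $B(\mathbf{u},x)$ vanish at the steady state and are therefore higher order. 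Since $\partial_{x}$-derivatives are recovered from $\partial_{t}$-derivatives through the equation, an $L^\infty$ bound on $\mathbf{U}$ yields a $C^{q}$ bound on $\mathbf{u}$, while well-posedness and the a priori estimate are supplied by Theorem \ref{wp}.

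Next I would introduce, for each block and for large $p$, the functional
\[
W_p(t)=\int_0^L\sum_{i=1}^n\left(\frac{f_i(x)}{\Delta_i^{2}}\,u_i(t,x)^2\right)^{p}e^{-p\mu\,x\,\sgn(\Lambda_i)}\,dx,
\]
with a small tuning parameter $\mu>0$, and take the total Lyapunov functional to be the sum of these over the $q+1$ derivative blocks. Differentiating in time, substituting $\partial_{t}u_i=-\Lambda_i\partial_{x}u_i-\sum_k M_{ik}u_k+(\text{h.o.t.})$, and integrating the transport term by parts splits $dW_p/dt$ into an interior term, boundary contributions at $x=0,L$, and higher-order remainders absorbed for small data. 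The leading interior coefficient of $\int (f_iu_i^2/\Delta_i^2)^p e^{\cdots}$ is, up to $O(1)$ in $p$,
\[
p\left(\frac{\Lambda_i f_i'}{f_i}-2M_{ii}+2\sum_{k\neq i}|M_{ik}|\sqrt{f_i/f_k}\right)-p\mu|\Lambda_i|,
\]
and the bracket is rendered $\le 0$ by exactly the differential inequality \eqref{01cond111} (after multiplying through by $f_i$ the cross weights $\sqrt{f_i/f_k}$ become $f_i^{3/2}/\sqrt{f_k}$), so that the exponential weight leaves a strictly negative interior rate $-p\mu\min_i|\Lambda_i|$. For the boundary terms I would eliminate the incoming characteristics through \eqref{bound1}, so that the homogeneous part is governed by $\lVert \Delta G'(0)\Delta^{-1}\rVert_{\infty}$ acting on the $\Delta$-weighted outgoing trace; condition \eqref{01condauxbords}, in which $f_i$ is evaluated at the outgoing end $l_i$ and the incoming end $L-l_i$, is precisely what makes this boundary map a strict contraction in the weighted sup norm, leaving only a clean contribution of the disturbance $\mathbf{d}$.

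This yields, at finite $p$, a differential inequality of the form
\[
\frac{dW_p}{dt}\le -\alpha p\,W_p+C\,|\mathbf{d}(t)|^{2p}+(\text{h.o.t.}),\qquad \alpha\sim\mu\min_i|\Lambda_i|>0 .
\]
The remaining work is to convert this into the ISS estimate \eqref{ISS}, and here I would stress the point flagged in the text: one cannot pass to the limit $p\to\infty$ at the level of a differential inequality for $V=\lim_p W_p^{1/2p}$, i.e.\ one does \emph{not} obtain the textbook inequality \eqref{usualLyap}. Instead I would integrate the finite-$p$ inequality by Grönwall to get
\[
W_p(t)\le e^{-\alpha p t}W_p(0)+C\int_0^t\left(e^{-\alpha(t-s)/2}|\mathbf{d}(s)|\right)^{2p}ds+\cdots,
\]
and only then take $2p$-th roots and let $p\to\infty$: the first term tends to $e^{-\gamma t}\lVert\mathbf{u}_0\rVert_{C^{q}}$, while the time integral, being an $L^{2p}(0,t)$ norm, tends to the fading-memory supremum $\sup_{\tau\in[0,t]}e^{-\gamma(t-\tau)}|\mathbf{d}(\tau)|$ with $\gamma=\alpha/2$. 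Summing over the $q+1$ blocks reproduces the full right-hand side $\sum_{k=0}^q\sup_\tau e^{-\gamma(t-\tau)}|\mathbf{d}^{(k)}(\tau)|$ of \eqref{ISS}.

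I expect two places to be the real obstacles. The first, and the one singled out by the authors, is this limiting argument together with the boundary cross terms: when $(G'(0)\mathbf{u}_{\mathrm{out}}+\mathbf{d})^{2p}$ is expanded, the mixed state--disturbance terms must be split by Young's inequality so that \eqref{01condauxbords} retains enough margin to keep the state part strictly contractive \emph{and} produce a disturbance term whose constant stays bounded as $p\to\infty$; only then does the gain $C_2$ remain finite in the limit. The second is the $C^{q}$ bookkeeping for $q\ge 1$: one must verify that the single pair of conditions \eqref{01cond111}--\eqref{01condauxbords} simultaneously closes the estimate for every derivative block of the augmented system and that all inter-block coupling terms are genuinely higher order in the small $C^{q}$ data, so that they are absorbed into the strictly negative interior rate rather than spoiling it.
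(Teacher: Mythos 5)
Your proposal follows essentially the same route as the paper: a weighted $L^{2p}$ functional with space-dependent weights $f_i(x)$ and exponential tilts, the interior condition \eqref{01cond111} used to make the source contribution sign-definite after a Young inequality on the cross terms, the boundary condition \eqref{01condauxbords} combined with the splitting $(a+d)^{2p}\le(1+\alpha)^{2p}a^{2p}+(1+1/\alpha)^{2p}d^{2p}$ so that the disturbance constant stays bounded as $p\to\infty$, Gr\"onwall at finite $p$ followed by $2p$-th roots and the limit $p\to\infty$ (rather than a differential inequality for the limiting functional), and an augmented system for the $C^{q}$ extension — all of which is exactly the paper's architecture. Two points need repair. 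First, and most substantively: in the quasilinear case the derivative blocks cannot be the raw components $\partial_{t}^{j}u_{i}$. The equation for $\partial_{t}^{j}\mathbf{u}$ has principal part $A(\mathbf{u},x)\partial_{x}(\partial_{t}^{j}\mathbf{u})$, which is not diagonal off the steady state, and its off-diagonal entries multiply $\partial_{x}\partial_{t}^{q}\mathbf{u}$ — one derivative more than the $C^{q}$ norm controls — so they can neither be integrated by parts componentwise nor dismissed as higher order. The paper's functional $W_{2,p}$ is built on $(E(\mathbf{u},x)\partial_{t}\mathbf{u})_{i}$ with $E(\mathbf{u},x)$ a diagonalizer of $A(\mathbf{u},x)$ precisely to remove this obstruction, and your sketch needs the same conjugation in every derivative block. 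Second, a normalization slip: you fold $\Delta_i^{2}$ into the interior weight, writing $f_i(x)/\Delta_i^{2}$, but the Young-inequality balance that converts the cross terms $|M_{ik}|\,|u_i|^{2p-1}|u_k|$ into the combination $-M_{ii}f_i+\sum_{k\neq i}|M_{ik}|f_i^{3/2}/\sqrt{f_k}$ of \eqref{01cond111} is calibrated for the weight $f_i$ itself; with the weight $f_i/\Delta_i^{2}$ the cross ratios become $\sqrt{f_i/f_k}\,\Delta_k/\Delta_i$, and \eqref{01cond111}, which is not invariant under componentwise rescaling of the $f_i$, no longer closes the interior estimate. Keep $f_i(x)$ as the interior weight and bring $\Delta$ in only when estimating the boundary terms, as the paper does.
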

A way to adapt the proof of Theorem \ref{th10} is given in Appendix~\ref{inhomogeneous}. Again, the gain of the ISS estimate \eqref{ISS} can be computed from $\Delta$, the values $f_{i}(L-l_{i})$ and the parameters of the system. Moreover,  Remarks \ref{rmk1} and \ref{rmk-intern-disturb} still apply. 

\section{Comparison with existing ISS results and $2\times2$ systems.}

\subsection{Comparison with Karafyllis-Krstic condition for linear $2\times2$ systems}

To our knowledge, there are no other results in the literature for ISS in the sup-norm for general quasilinear systems. In the particular case of $2\times 2$ linear systems, the best existing result is the following, obtained by Karafyllis and Krstic in \cite[Section 9.4]{KarafyllisKrstic}. Consider a linear system of the form
\begin{gather}
\partial_{t}\begin{pmatrix}u_{1}(t,x)\\u_{2}(t,x)\end{pmatrix}+\begin{pmatrix}\Lambda_{1}&0\\0&\Lambda_{2}\end{pmatrix}\partial_{x}\begin{pmatrix}u_{1}(t,x)\\u_{2}(t,x)\end{pmatrix}+\begin{pmatrix}0&a(x)\\ b(x)&0\end{pmatrix}\begin{pmatrix}u_{1}(t,x)\\u_{2}(t,x)\end{pmatrix}=0
\label{linear1}\\
\begin{pmatrix}
\mathbf{u}_{1}(t,0)\\
\mathbf{u}_{2}(t,\textcolor{black}{1})
\end{pmatrix}=\begin{pmatrix}0&k_{1}\\k_{2}&0\end{pmatrix}
\begin{pmatrix}
\mathbf{u}_{1}(t,\textcolor{black}{1})\\
\mathbf{u}_{2}(t,0)
\end{pmatrix}+\mathbf{d}(t),
\label{linearbound}
\end{gather}
where $a(x)$ and $b(x)$ are continuous functions in $C^{0}([0,1])$, $\Lambda_{1}>0$ and $\Lambda_{2}<0$ are constant speed propagations, $k_{1}$ and $k_{2}$ are  constant parameters, and $\mathbf{d}\in L^{\infty}(\mathbb{R}_{+})$ is the boundary disturbance. Karafyllis and Krstic showed, using a small-gain analysis, that if there exists $K>0$ such that
\begin{equation}
\begin{split}
&(\left|k_{1}\right|+\left|k_{2}\right|)\exp(-K)<1,\\
&\left(\sqrt{\frac{\exp(2K)-\exp{K}}{
|\Lambda_{2}|K}B}+\sqrt{\lvert k_{2}\rvert}\right)\left(\sqrt{\frac{1-\exp(-K)}{\Lambda_{1}K}A}+\sqrt{\left|k_{1}\right|}\right)<1,\\
\text{where }&A:=\max\limits_{0\leq z\leq1}\left|a(z)\exp(2Kz)\right|\text{ and }B:=\max\limits_{0\leq z\leq1}\left|b(z)\exp(-2Kz)\right|,
\label{cond1}
\end{split}
\end{equation}
then the system \eqref{linear1}--\eqref{linearbound} is ISS for the $C^{0}$ norm.
Note that it is assumed here without loss of generality that $L=1$.
In this setting, we can apply Theorem \ref{th1} to the system \eqref{linear1}--\eqref{linearbound} and, if we assume in addition that $a$ and $b$ are constants,  we can compare our conditions to
\eqref{cond1}.
We have the following preliminary lemma.
\begin{lem}
For the system \eqref{linear1}--\eqref{linearbound} with $a$ and $b$ constant,  the conditions \eqref{01cond111}--\eqref{01condauxbords} of Theorem \ref{th1} are respectively equivalent to
\begin{equation}
\begin{split}
\text{(interior condition)    }&\left(\frac{\pi}{2}-\sqrt{\left|\frac{a b}{\Lambda_{1}\Lambda_{2}}\right|}\right)\geq0,\\
\text{(boundary conditions)    }&|k_{1}| <\sqrt{\left|\frac{a\Lambda_{2}}{b\Lambda_{1}}\right|}\tan\left(\frac{\pi}{2}-\sqrt{\left|\frac{a b}{\Lambda_{1}\Lambda_{2}}\right|}\right),\\
&|k_{2}|< \left|\frac{b\Lambda_{1}}{a\Lambda_{2}}\right| \left(\tan\left(\atan\left(\sqrt{\left|\frac{b\Lambda_{1}}{a\Lambda_{2}}\right|}|k_{1}|\right)+\sqrt{\left|\frac{a b}{\Lambda_{1}\Lambda_{2}}\right|}\right)\right)^{-1}.
\end{split}
\label{condprop2}
\end{equation}
\label{prop2}
\end{lem}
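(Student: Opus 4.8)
The plan is to treat the two conditions of Theorem~\ref{th1} separately, exploiting that for \eqref{linear1}--\eqref{linearbound} we have $n=2$, $m=1$, constant $\Lambda_1>0>\Lambda_2$, and constant $M=\begin{pmatrix}0&a\\ b&0\end{pmatrix}$, so the diagonal entries $M_{ii}$ vanish. First I would write \eqref{01cond111} componentwise: for $i=1$ it reads $\Lambda_1 f_1'\le -2|a|\,f_1^{3/2}/\sqrt{f_2}$, and for $i=2$, dividing by $\Lambda_2<0$ and flipping the inequality, $f_2'\ge (2|b|/|\Lambda_2|)\,f_2^{3/2}/\sqrt{f_1}$. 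The key device is to pass to the single scalar ratio $s:=\sqrt{f_1/f_2}>0$. Differentiating $s$ and inserting the two inequalities collapses them into one Riccati differential inequality $s'\le-\alpha s^2-\beta$ with $\alpha=|a|/\Lambda_1$, $\beta=|b|/|\Lambda_2|$; conversely, given any positive $s$ with $s'\le-\alpha s^2-\beta$ one reconstructs an admissible pair $(f_1,f_2)$. Hence solvability of \eqref{01cond111} on $[0,1]$ with $f_i>0$ is equivalent to solvability of this scalar Riccati inequality.

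Next I would solve the extremal (equality) equation $s'=-\alpha s^2-\beta$ explicitly. Writing $\omega=\sqrt{\alpha\beta}=\sqrt{|ab/(\Lambda_1\Lambda_2)|}$ and $\mu=\sqrt{\alpha/\beta}=\sqrt{|a\Lambda_2/(b\Lambda_1)|}$, the solution is $s(x)=\mu^{-1}\tan(\theta(x))$ with $\theta(x)=\omega(C-x)$ for a free constant $C$. Positivity and finiteness of $s$ on $[0,1]$ force $\theta(x)\in(0,\pi/2)$ throughout, i.e.\ the total phase swept, $\omega$, must satisfy $\omega\le\pi/2$; this is exactly the interior condition in \eqref{condprop2}. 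Integrating the two scalar equations back gives the extremal profiles $f_1(x)=K_1/\cos^2\theta(x)$ and $f_2(x)=K_2/\sin^2\theta(x)$ with $K_1/K_2=\mu^{-2}$; in particular $f_1$ is decreasing and $f_2$ increasing, which pins down which endpoint values realize the $\inf_i$ and $\sup_i$ in \eqref{01condauxbords}. A comparison argument (the equality flow is the slowest-decaying admissible $s$, and the slowest-growing $f_2$) shows that for the purpose of \eqref{01condauxbords} it suffices to optimize over these extremal profiles together with the phase offset $\theta_0:=\theta(0)$ and the diagonal weights $\Delta=\mathrm{diag}(\Delta_1,\Delta_2)$.

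For the boundary part I would substitute the profiles into \eqref{01condauxbords}. Since $l_1=1$ and $l_2=0$, its right-hand side becomes a ratio of a $\min$ and a $\max$ of the four quantities $f_1(1)/\Delta_1^2,\ f_2(0)/\Delta_2^2$ (numerator) and $f_1(0)/\Delta_1^2,\ f_2(1)/\Delta_2^2$ (denominator), while $\|\Delta G'(0)\Delta^{-1}\|_\infty=\max\bigl(|k_1|\Delta_1/\Delta_2,\ |k_2|\Delta_2/\Delta_1\bigr)$. Introducing the angles $\phi_1=\atan(|k_1|/\mu)$ and $\phi_2=\atan(\mu^2|k_2|)$ and using $\tan\theta_0=\mu s(0)$, $\tan\theta_1=\mu s(1)$, the two inequalities hidden in $\max(\cdot)<(\min/\max)$ should be arranged, after choosing $\Delta$ to balance the competing terms, into the two half-constraints $\theta_0\le\pi/2-\phi_1$ and $\theta_1\ge\phi_2$. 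Since $\theta_0-\theta_1=\omega$, eliminating $\theta_0,\theta_1$ yields the phase-budget inequality $\phi_1+\phi_2+\omega\le\pi/2$, and unwinding the two half-constraints through the tangent addition formula $\tan(\phi_1+\omega)=(\tan\phi_1+\tan\omega)/(1-\tan\phi_1\tan\omega)$ reproduces precisely the two boundary inequalities of \eqref{condprop2}.

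I expect the genuinely delicate step to be this last optimization. The obstacle is twofold. First, the $\min$/$\max$ coming from the matrix $\infty$-norm and from the $\inf_i/\sup_i$ over the two components force a case analysis, and one must show that the optimal joint choice of $\Delta$ and of the phase offset $\theta_0$ makes the correct pair of terms active, so that the resulting bound is simultaneously sharp for $k_1$ and for $k_2$; keeping the monotonicity of $f_1,f_2$ and the associated sign bookkeeping straight is exactly what makes the clean inequality $\phi_1+\phi_2+\omega\le\pi/2$ emerge rather than a coupled product condition. Second, the converse (necessity) requires arguing that no admissible, possibly non-extremal, pair $(f_1,f_2)$ and no weight $\Delta$ can do better than the extremal profiles, which rests on the comparison principle for the Riccati inequality together with the reconstruction freedom in $f_2$. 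Once these geometric and monotonicity details are settled, the algebraic passage to the $\tan/\atan$ form of \eqref{condprop2} is then routine.
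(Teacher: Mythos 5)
Your reduction of the interior condition \eqref{01cond111} to a scalar Riccati inequality is correct and is indeed the right mechanism: since $M_{11}=M_{22}=0$ here, the two componentwise inequalities combine, via $s=\sqrt{f_1/f_2}$, into $s'\le-\alpha s^2-\beta$ with $\alpha=|a|/\Lambda_1$, $\beta=|b|/|\Lambda_2|$, and the reconstruction $f_2'=2\beta f_2/s$, $f_1=s^2f_2$ gives the converse. But the paper does not redo this work: it invokes Proposition \ref{prop22} (imported from \cite{C1_22}), which asserts that \eqref{01cond111}--\eqref{01condauxbords} \emph{jointly} are equivalent to the existence on $[0,1]$ of the solution $\eta$ of \eqref{condn1} with $\eta(1)<|k_2|^{-1}$, and the proof of Lemma \ref{prop2} is then a two-line explicit integration of that Riccati equation. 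What your plan leaves open is exactly the part Proposition \ref{prop22} supplies and that you yourself flag as delicate: showing that the requirement $\max(|k_1|\Delta_1/\Delta_2,\,|k_2|\Delta_2/\Delta_1)<\inf_i(f_i(l_i)/\Delta_i^2)/\sup_i(f_i(L-l_i)/\Delta_i^2)$ --- which, once the $\inf$ and $\sup$ are unpacked, is a system of four inequalities per reflection coefficient to be satisfied by \emph{some} admissible $(f_1,f_2)$ and \emph{some} $\Delta$ --- collapses to the two endpoint constraints $\eta(0)=|k_1|$ and $\eta(1)<|k_2|^{-1}$. Your sketch (balance the active terms by the choice of $\Delta$, exploit the monotonicity of the extremal profiles $f_1=K_1/\cos^2\theta$, $f_2=K_2/\sin^2\theta$, use a Riccati comparison principle for necessity) names the right tools, but none of the case analysis is carried out; this equivalence is precisely the content of \cite[Section 4]{C1_22} and is not routine. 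As written, the proposal is a plan whose central step is missing.

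A point of calibration if you do push the computation through: you will not land on \eqref{condprop2} exactly as printed. The Riccati route yields $|k_2|<\sqrt{|b\Lambda_1/(a\Lambda_2)|}\,\bigl(\tan(\atan(\sqrt{|b\Lambda_1/(a\Lambda_2)|}\,|k_1|)+\omega)\bigr)^{-1}$, i.e.\ a prefactor $\sqrt{c_2/c_1}=\mu^{-1}$; this is what the paper's own computation $\eta(1)=\sqrt{c_1/c_2}\tan(\cdot)<|k_2|^{-1}$ produces, and it is the version consistent with the homogeneous limit $|k_1k_2|<1$. The prefactor $|b\Lambda_1/(a\Lambda_2)|=\mu^{-2}$ in the displayed statement is inconsistent with the paper's own proof and appears to be a typo. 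Your choice $\phi_2=\atan(\mu^2|k_2|)$ is calibrated to that typo rather than to the correct $\atan(\mu|k_2|)$, so your phase-budget identity $\phi_1+\phi_2+\omega\le\pi/2$ would come out off by a factor of $\mu$ in the $k_2$ slot.
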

This lemma can then be used to prove the following proposition.
\begin{prop}
Consider the system \eqref{linear1}--\eqref{linearbound} with $a$ and $b$ constant.
Suppose there exists $K>0$ such that \eqref{cond1} holds.
Then the conditions \eqref{condprop2} of Lemma \ref{prop2}, and consequently the two conditions \eqref{01cond111}--\eqref{01condauxbords} of Theorem \ref{th1},
are satisfied.
\label{th2}
\end{prop}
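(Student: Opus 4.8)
The plan is to deduce the explicit inequalities \eqref{condprop2} of Lemma \ref{prop2} directly from the small-gain condition \eqref{cond1}; Theorem \ref{th1} then follows because, by Lemma \ref{prop2}, \eqref{condprop2} is equivalent to the solvability of \eqref{01cond111}--\eqref{01condauxbords}. Throughout I set $\sigma=\sqrt{|ab/(\Lambda_1\Lambda_2)|}$ and $r=\sqrt{|a\Lambda_2/(b\Lambda_1)|}$, so that $|a|/\Lambda_1=\sigma r$ and $|b|/|\Lambda_2|=\sigma/r$, and I assume $ab\neq0$ (the triangular cases being recovered as $\sigma\to0$). First I would evaluate the constants for constant $a,b$: since $K>0$ the weights $e^{2Kz}$, $e^{-2Kz}$ are monotone on $[0,1]$, whence $A=|a|e^{2K}$ and $B=|b|$. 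Substituting and using $(1-e^{-K})e^{2K}=e^{2K}-e^{K}$, the two square-root factors in the second line of \eqref{cond1} collapse to $\sqrt{\mu\,\sigma/r}$ and $\sqrt{\mu\,\sigma r}$ with $\mu=\mu(K):=(e^{2K}-e^{K})/K$, so that line becomes $(\sqrt{\mu\,\sigma/r}+\sqrt{|k_2|})(\sqrt{\mu\,\sigma r}+\sqrt{|k_1|})<1$.

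The elementary fact driving the argument is that $\mu(K)>1$ for every $K>0$: the function $g(K)=e^{2K}-e^{K}-K$ satisfies $g(0)=g'(0)=0$ and $g''>0$, hence $g>0$ on $(0,\infty)$. Since $\mu\ge1$ only enlarges each factor, the displayed inequality implies its $\mu=1$ version; introducing $u:=\sqrt{|k_1|/r}$ and $v:=\sqrt{r|k_2|}$ (the factors $\sqrt r$ and $1/\sqrt r$ cancelling) this reduces to the single clean inequality
\begin{equation}
(\sqrt{\sigma}+u)(\sqrt{\sigma}+v)<1. \label{eq:ppred}
\end{equation}
Note that only the second line of \eqref{cond1} is used; the first line will turn out to be superfluous. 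The interior condition of \eqref{condprop2} is then immediate, since \eqref{eq:ppred} gives $\sigma\le(\sqrt\sigma+u)(\sqrt\sigma+v)<1<\pi/2$. For the first boundary condition, bounding the second factor of \eqref{eq:ppred} below by $\sqrt\sigma$ yields $u<(1-\sigma)/\sqrt\sigma$, i.e. $|k_1|/r=u^2<(1-\sigma)^2/\sigma$; comparing with the required bound $u^2<\cot\sigma=\tan(\pi/2-\sigma)$ reduces the claim to the scalar estimate
\begin{equation}
h(\sigma):=\frac{(1-\sigma)^2}{\sigma}\tan\sigma\le1,\qquad \sigma\in(0,1), \label{eq:pph}
\end{equation}
which follows from elementary calculus: $h$ is strictly decreasing on $(0,1)$ with $\lim_{\sigma\to0^+}h(\sigma)=1$.

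Finally, for the second boundary condition I would rewrite the bound of \eqref{condprop2} through the tangent addition formula as $u^2v^2+(u^2+v^2)\tan\sigma<1$ in the variables $u,v$. The left-hand side is nondecreasing in $u$ and in $v$ on $[0,\infty)$, while the feasible region \eqref{eq:ppred} is bounded above and to the right by the hyperbola $(\sqrt\sigma+u)(\sqrt\sigma+v)=1$; hence the supremum of the left-hand side over the feasible region is attained on that hyperbola. Parametrising the arc by $u$ and differentiating, $u^2v^2+(u^2+v^2)\tan\sigma$ is maximised at the endpoints $u=0$ or $v=0$, where it equals exactly $h(\sigma)$, so \eqref{eq:pph} closes this case as well. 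I expect this last monotonicity/endpoint reduction of the two-variable expression, together with the scalar bound \eqref{eq:pph}, to be the only genuine work; the simplification of $A,B$ and the verification $\mu(K)>1$ are the one other point needing care, since it is precisely the factor $\mu$ — rather than the first line of \eqref{cond1} — that absorbs the discrepancy between the exponential weights of the small-gain analysis of \cite{KarafyllisKrstic} and the optimal tangent weights underlying Lemma \ref{prop2}.
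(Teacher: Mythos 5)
Your reduction of the hypothesis coincides exactly with the paper's: both evaluate $A=|a|e^{2K}$, $B=|b|$, use $(1-e^{-K})e^{2K}=e^{2K}-e^{K}$, and drop the common factor $(e^{2K}-e^{K})/K\geq 1$ to reach $(\sqrt{c_1}+\sqrt{|k_1|})(\sqrt{c_2}+\sqrt{|k_2|})<1$ (the paper's \eqref{cond20}), using only the second line of \eqref{cond1}. From there the two arguments genuinely diverge. The paper recasts the boundary conditions as blow-up abscissas and proves $x_1>1$ by a case split on $\sqrt{c_1c_2}$ with $\atan x\geq x-x^3/3$, and $x_2>1$ via the substitution $Z=\sqrt{X}+\sqrt{Y}$ and the tangent-line bound $\atan(a-x)\leq \atan(a)-x/(1+a^2)$. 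You instead normalize to $u=\sqrt{|k_1|/r}$, $v=\sqrt{r|k_2|}$ and funnel both boundary conditions into the single scalar inequality $h(\sigma)=\sigma^{-1}(1-\sigma)^2\tan\sigma\leq 1$ on $(0,1)$, the second one through maximizing $\Phi(u,v)=u^2v^2+(u^2+v^2)\tan\sigma$ over the region $(\sqrt\sigma+u)(\sqrt\sigma+v)<1$. This is a more unified endgame, and I have checked that both of your key claims are true; note only that passing to $u^2v^2+(u^2+v^2)\tan\sigma<1$ via the addition formula requires $1-u^2\tan\sigma>0$, which you do get from the first boundary condition, so the order of your argument matters.

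Two steps are, however, asserted where a proof is needed, and one of them conceals the only real subtlety of your route. (i) That $h$ is strictly decreasing on $(0,1)$ is true but not free: $(\ln h)'<0$ amounts to $2\sigma(1-\sigma)<(1+\sigma)\sin 2\sigma$, which follows from $\sin x\geq x-x^3/6$. (ii) The claim that $\Phi$ restricted to the arc $(\sqrt\sigma+u)(\sqrt\sigma+v)=1$ is maximized at the endpoints does \emph{not} follow from ``parametrising by $u$ and differentiating'': $\Phi$ and the arc are symmetric under $u\leftrightarrow v$, so the restriction has an interior critical point at $u=v=1-\sqrt\sigma$, which must be ruled out as the maximizer. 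A clean repair: set $P=\sqrt\sigma+u$ and $w=P+P^{-1}\in\bigl[2,\sqrt\sigma+1/\sqrt\sigma\bigr]$; then
\begin{equation*}
\Phi=\bigl(1+\sigma-\sqrt\sigma\, w\bigr)^2+\bigl(w^2-2\sqrt\sigma\, w+2\sigma-2\bigr)\tan\sigma
\end{equation*}
is a convex quadratic in $w$ (leading coefficient $\sigma+\tan\sigma>0$), so its maximum is at $w=2$ or at $w=\sqrt\sigma+1/\sqrt\sigma$; the value at $w=2$ is $(1-\sqrt\sigma)^4+2(1-\sqrt\sigma)^2\tan\sigma$, and the inequality asserting it is at most $h(\sigma)$ reduces, after dividing by $(1-\sqrt\sigma)^2$ and using $\tan\sigma\geq\sigma$, to $1-2\sqrt\sigma+\sigma\leq 1+2\sqrt\sigma-\sigma$, which holds. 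With (i) and (ii) filled in, your proof is complete and constitutes a valid alternative to the one given in the paper.
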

Lemma \ref{prop2} and Proposition \ref{th2}  are proved in Section \ref{secprop2}.\\[0.8em]
It is interesting to note that Proposition \ref{th2} is a \textit{strict} implication. The converse does not hold in general. In fact it only holds when
$a=b=0$, in which case both conditions are equivalent to $|k_{1}k_{2}|<1$ which is the optimal (i.e. necessary and sufficient) condition \cite[Section 2.2.1]{BastinCoron1D}. This is shown in Appendix \ref{Appconverse}.

\subsection{Limit length of \textcolor{black}{ISS}}

Another way to interpret these results is by looking at the limit length \textcolor{black}{under which we can guarantee ISS of a given system when the boundary conditions \eqref{bound1} correspond to a boundary control with a boundary disturbance. This length is
defined as the maximal length below which 
ISS holds with $G=0$, as stated in the following definition.}
\begin{defn}
Let a system be of the form \eqref{sys1}.
We call $maximal$ $length$ $of$ $ISS$ the length $L_{\max}>0$ such that for any $L\in (0,L_{\max})$, 
the system \eqref{sys1}, \eqref{bound1} with $G=0$ defined on $[0,L]$ is ISS.
\end{defn}
Our result gives a lower bound on this length $L_{\max}$ as follows.
\begin{cor}
Let a system be of the form \eqref{sys1}, \eqref{bound1} with $G=0$. 
The length $L_{\max}$ is strictly positive, possibly infinite.
\label{corl}
\end{cor}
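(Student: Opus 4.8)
The plan is to deduce the result directly from Theorem \ref{th1}, whose two hypotheses \eqref{01cond111} and \eqref{01condauxbords} collapse to essentially one when $G=0$. First I would observe that $G=0$ gives $G'(0)=0$, so the left-hand side of \eqref{01condauxbords} vanishes identically: taking for instance $\Delta=\mathrm{Id}$, the condition reads $0<\inf_i f_i(l_i)/\sup_i f_i(L-l_i)$, and the right-hand side is a ratio of strictly positive quantities as soon as the $f_i$ are positive on $[0,L]$. Hence \eqref{01condauxbords} is satisfied for free, and the whole problem reduces to exhibiting, for $L$ small enough, positive functions $(f_1,\dots,f_n)$ on $[0,L]$ solving the interior condition \eqref{01cond111}.

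To produce such functions I would solve the ODE system obtained by turning the inequality \eqref{01cond111} into an equality, namely
\begin{equation}
f_i'(x)=\frac{-2}{\Lambda_i(x)}\left(-M_{ii}(x)f_i(x)+\sum_{k=1,\,k\neq i}^{n}|M_{ik}(x)|\frac{f_i^{3/2}(x)}{\sqrt{f_k(x)}}\right),\qquad f_i(0)=1.
\label{corlode}
\end{equation}
Since $\Lambda_i$ does not vanish and the right-hand side is continuous in $x$ and $C^1$ (hence locally Lipschitz) in $f=(f_1,\dots,f_n)$ on the open positive orthant, the Cauchy–Lipschitz theorem gives a unique maximal solution. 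Because $f_i(0)=1$ and the $f_i$ are continuous, they stay $\geq 1/2>0$ on some interval $[0,L_0]$ with $L_0>0$; on that interval \eqref{corlode} holds, so in particular the one-sided inequality \eqref{01cond111} holds (equality implies $\leq$ whatever the sign of $\Lambda_i$, since I have solved for $f_i'$ directly).

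It then suffices to restrict: for every $L\in(0,L_0)$ the functions $f_i|_{[0,L]}$ are positive and satisfy \eqref{01cond111}, while \eqref{01condauxbords} holds automatically, so Theorem \ref{th1} applies and the system \eqref{sys1}, \eqref{bound1} with $G=0$ on $[0,L]$ is ISS. This yields $L_{\max}\geq L_0>0$, proving strict positivity; the qualifier ``possibly infinite'' simply records that the maximal solution of \eqref{corlode} may remain positive for all $x\geq 0$, in which case the construction works for every length and $L_{\max}=+\infty$.

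The main (and essentially only) point to watch is the well-posedness and positivity of \eqref{corlode}: one must check that $\partial_{\mathbf{u}}B(0,\cdot)$ is regular enough — it is, since $B$ is $C^q$ with $q\geq 1$ — and that the singular factor $f_k^{-1/2}$ causes no trouble, which is guaranteed so long as the $f_k$ stay positive, precisely what the continuity argument secures on a short interval. The genuinely useful observation behind the corollary is that $G=0$ annihilates the boundary obstruction \eqref{01condauxbords}, leaving only the interior condition, which the small-length regime always renders solvable.
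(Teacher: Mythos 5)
Your proof is correct and follows essentially the same route as the paper: the corollary is deduced directly from Theorem \ref{th1}, with $G=0$ making the boundary condition \eqref{01condauxbords} hold automatically and a local positive solution of the ODE version of \eqref{01cond111} (the paper's \eqref{sysfcondi-C}) supplying the required $f_i$ on a short interval. The paper dispatches this in one line; your write-up merely makes the Cauchy--Lipschitz construction and the positivity argument explicit.
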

This follows directly from Theorem \ref{th1}. In practice, \textcolor{black}{Theorem \ref{th1} allows obtain a numerical lower bound of $L_{\max}$,} as follows: for any constant $\textcolor{black}{C}>0$ let $L(C)\in (0,+\infty]$ be such that the maximal solution of
\begin{equation}
\label{sysfcondi-C}
\left\{\begin{split}
\Lambda_{i}(x)f_{i}'(x)&= -2\left(-M_{ii}(x)f_{i}(x) + \sum\limits_{k=1,k\neq i}^{n}\lvert M_{ik}(x)\rvert \frac{f_{i}^{3/2}(x)}{\sqrt{f_{k}(x)}} \right),\text{ } x\geq 0,\\
f_{i}(0)&=\textcolor{black}{C}\text{ if }1\leq i\leq m,\\
f_{i}(0)&=0\text{ if }m+1\leq i\leq n.
\end{split}\right.
\end{equation}
is defined on $[0,L(C))$. Then $L(C)$ is a nondecreasing function of $C>0$ and, for every $C>0$,   $L(\textcolor{black}{C})\leq L_{\max}\in (0,+\infty]$. \textcolor{black}{Therefore a lower bound of $L_{\max}$ can be estimated in practice by choosing $C>0$ large enough and by solving numerically system \eqref{sysfcondi-C} in order to estimate $L(C)$.}
\\[0.8em]
Finally, to show the added value of this result, note that in the case of a $2\times 2$ system,
Theorem \ref{th1} provides an estimate of the limit length of ISS at least as good as the existing known result given in \cite{KarafyllisKrstic} as stated in the following proposition.
\begin{prop}
Let $k_{1}=k_{2}=0$ and assume that the condition \eqref{cond1} holds. Then, the conditions \eqref{01cond111}--\eqref{01condauxbords} of Theorem \ref{th1} are satisfied.
\label{proplimit}
\end{prop}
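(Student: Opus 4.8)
The plan is to verify the two conditions \eqref{01cond111} and \eqref{01condauxbords} of Theorem \ref{th1} for the system \eqref{linear1}--\eqref{linearbound} with $k_1=k_2=0$, $L=1$ and $M(x)=\begin{pmatrix}0&a(x)\\ b(x)&0\end{pmatrix}$. Since $k_1=k_2=0$ forces $G'(0)=0$, the left-hand side of \eqref{01condauxbords} vanishes, while its right-hand side is a quotient of strictly positive numbers as soon as a solution with $f_i>0$ exists; hence \eqref{01condauxbords} is automatic (take e.g. $\Delta=I$), and the whole problem reduces to producing $f_1,f_2\in C^1([0,1])$, strictly positive, solving \eqref{01cond111}. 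Written out, these read $\Lambda_1 f_1'\le -2|a|f_1^{3/2}/\sqrt{f_2}$ and $\Lambda_2 f_2'\le -2|b|f_2^{3/2}/\sqrt{f_1}$, i.e. $f_1$ must be decreasing and $f_2$ increasing.

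First I would reduce to explicit coefficients. By the definition of $A,B$ in \eqref{cond1} one has $|a(x)|\le Ae^{-2Kx}$ and $|b(x)|\le Be^{2Kx}$ on $[0,1]$; since enlarging $|a|,|b|$ only strengthens the (negative) right-hand sides, it suffices to solve \eqref{01cond111} with $|a|,|b|$ replaced by $\hat a(x)=Ae^{-2Kx}$ and $\hat b(x)=Be^{2Kx}$. Solving the associated equalities and setting $s=\sqrt{f_2/f_1}$, the two equations collapse to the scalar Riccati equation $s'=\frac{\hat b}{|\Lambda_2|}s^2+\frac{\hat a}{\Lambda_1}$; the Prüfer substitution $s=\sqrt{\hat a|\Lambda_2|/(\hat b\Lambda_1)}\,\tan\theta$ then turns it, using that $\hat a\hat b\equiv AB$ is constant (which makes $\sqrt{\hat a\hat b/(\Lambda_1|\Lambda_2|)}$ constant and contributes a $2K$ through the log-derivative of the amplitude), into the autonomous equation $\theta'=S+K\sin(2\theta)$ with $S:=\sqrt{AB/(\Lambda_1|\Lambda_2|)}$. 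A positive finite $s$ on $[0,1]$, and hence positive $f_1,f_2$ (recovered from $u'/u=-\hat a/(\Lambda_1 s)$ with $u=\sqrt{f_1}$), can be arranged as soon as $\theta$ can be kept in $(0,\pi/2)$ on $[0,1]$, i.e. as soon as $T:=\int_0^{\pi/2}\frac{d\theta}{S+K\sin 2\theta}=\frac12\int_0^{\pi}\frac{d\phi}{S+K\sin\phi}>1$: one then starts from a small $\theta_0>0$ with $\int_{\theta_0}^{\pi/2}(\cdots)>1$, and $\theta$ stays below $\pi/2$.

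It remains to deduce $T>1$ from \eqref{cond1}. With $k_1=k_2=0$ the first line of \eqref{cond1} is vacuous, and using the identity $(e^{2K}-e^K)(1-e^{-K})=(e^K-1)^2$ its second line becomes exactly $S<K/(e^K-1)$. The key estimate is then the elementary bound $\sin\phi\le\phi$ on $[0,\pi/2]$ together with the symmetry $\sin(\pi-\phi)=\sin\phi$, which yields $\int_0^{\pi}\frac{d\phi}{S+K\sin\phi}\ge 2\int_0^{\pi/2}\frac{d\phi}{S+K\phi}=\frac2K\ln\!\big(1+\tfrac{\pi K}{2S}\big)$, hence $T\ge\frac1K\ln\!\big(1+\tfrac{\pi K}{2S}\big)$. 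Consequently $T>1$ holds whenever $S<\frac{\pi K}{2(e^K-1)}$, and since $\pi/2>1$ this is implied — with room to spare — by the hypothesis $S<K/(e^K-1)$. This produces the required $(f_1,f_2)$ and completes the verification.

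I expect the main obstacle to be the step where one passes from the small-gain inequality \eqref{cond1} to the sharp existence threshold $T>1$ for the Riccati/Prüfer dynamics: the exact value of $T$ is a transcendental function of $S$ and $K$ that splits into cases according to $\mathrm{sign}(K^2-S^2)$, so comparing it directly with $K/(e^K-1)$ is unpleasant. The device that bypasses this is the uniform lower bound coming from $\sin\phi\le\phi$, valid for all $S,K>0$, which beats $K/(e^K-1)$ precisely by the factor $\pi/2$. A minor separate check is needed for the degenerate cases $A=0$ or $B=0$ (i.e. $a\equiv0$ or $b\equiv0$), where one inequality in \eqref{01cond111} is solved by a positive constant and the other by a direct integration with a small enough initial value, so that a positive solution on $[0,1]$ again exists.
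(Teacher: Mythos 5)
Your argument is correct, and it shares the paper's overall skeleton --- reduce \eqref{01cond111}--\eqref{01condauxbords} to the scalar Riccati equation $s'=|a|/\Lambda_{1}+(|b|/|\Lambda_{2}|)s^{2}$ (this is exactly the content of Proposition \ref{prop22}, which you in effect rederive through the substitution $s=\sqrt{f_{2}/f_{1}}$, and with $k_{1}=k_{2}=0$ the boundary inequality \eqref{01condauxbords} is indeed vacuous, so only existence of a positive solution on $[0,1]$ matters), then exploit the weights $e^{\pm 2Kx}$ so that only the constants $A_{1}=A/\Lambda_{1}$ and $B_{1}=B/|\Lambda_{2}|$ enter --- but it diverges from the paper in the final quantitative step. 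The paper sets $\eta_{K}=\eta e^{2Kx}$, majorizes by the autonomous Riccati equation $h'=A_{1}+B_{1}h^{2}+2Kh$, $h(0)=0$, solves it in closed form, and verifies that the blow-up time $x_{5}$ exceeds $1$ through a three-way case analysis on the sign of $A_{1}B_{1}-K^{2}$ (a $\tan$ formula, a $\sinh/\cosh$ formula, and a degenerate rational case), each requiring its own monotonicity computation. Your Pr\"ufer angle obeys $\theta'=S+K\sin 2\theta$ with $S=\sqrt{A_{1}B_{1}}$, which is precisely the phase form of the paper's equation for $h$ (put $h=\sqrt{A_{1}/B_{1}}\tan\theta$), so your crossing time $T$ and the paper's $x_{5}$ are the same quantity; the difference is that your single estimate $\sin\phi\leq\phi$ yields the uniform bound $T\geq K^{-1}\ln\left(1+\pi K/(2S)\right)$, which is $>1$ as soon as $S<\pi K/(2(e^{K}-1))$, and this absorbs the hypothesis $S<K/(e^{K}-1)$ with a factor $\pi/2$ to spare. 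This replaces the entire case split by one inequality and makes the slack between the Karafyllis--Krstic condition and the condition of Theorem \ref{th1} explicit, at the price of not giving the exact threshold. Two small points you flag correctly and that must appear in a full write-up: the monotonicity allowing $|a|,|b|$ to be replaced by $Ae^{-2Kx}, Be^{2Kx}$ (larger coefficients only make \eqref{01cond111} more demanding), and the separate treatment of $AB=0$, which the paper handles through the linear case $B_{1}=0$ of the same ODE.
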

\section{Definition of an ISS Lyapunov function for the $C^q$ norm} \label{sectionISS-Lyap-func}
We define an ISS Lyapunov function for the $C^{q}$ norm.
\begin{defn}
An ISS Lyapunov function for the $C^{q}$ norm and the system \eqref{sys1}--\eqref{bound1} is a functional $V$ on $C^{q}([0,L])$ such that
there exists $\delta$, $\gamma$, $c_{1}$, $c_{2}$, $C_{1}$ and $C_{2}$ positive constants such that for any $T>0$, for any solution $\mathbf{u}\in C^{q}([0,T]\times [0,L];\mathbb{R}^{n})$ to the system \eqref{sys1}--\eqref{bound1} with $\lVert \mathbf{u}(t,\cdot)\rVert_{C^{q}}\leq \delta$ and $\lVert \mathbf{d}\rVert_{C^{q}}\leq \delta$,
\begin{equation}
c_{1} \lVert \mathbf{u}(t,\cdot) \rVert_{C^{q}}\leq V(\mathbf{u}(t,\cdot))\leq c_{2}\lVert \mathbf{u}(t,\cdot) \rVert_{C^{q}},
\label{equiv}
\end{equation}
and
\begin{equation}
V(\mathbf{u}(t,\cdot))\leq C_{1}e^{-\gamma (t-s)}V(\mathbf{u}(s,\cdot))+C_{2}\left(\sum\limits_{k=0}^{q}\sup\limits_{\tau\in[s,t]}\left(e^{-\gamma(t-\tau)}|\mathbf{d}^{(k)}(\tau)|\right)\right).
\label{estimateV}
\end{equation}
\end{defn}
Obviously if there exists an ISS Lyapunov function for the $C^{q}$ norm the system is ISS for the $C^{q}$ norm.
\begin{rmk}[Noticeable difference with the usual approach to ISS Lyapunov stability analysis]
In the usual approach for ISS stability analysis with ISS Lyapunov functions
it is generally required to get a differential inequality of the form
\begin{equation}
\frac{dV(\mathbf{u}(t,\cdot))}{dt}\leq - C V(\mathbf{u}(t,\cdot))+\gamma (\sum\limits_{k=0}^{q}\sup\limits_{\tau\in[0,t]}|\mathbf{d}^{(k)}(\tau)|),
\label{estimateVuse}
\end{equation}
where $C>0$ is a positive constant and $\gamma$ is a class $\mathcal{K}$  function. This inequality is then used to obtain an ISS stability estimate.  It is remarkable that, with the Lyapunov function \eqref{defV} for the $C^{q}$ norm that we shall use in the proof of Theorem \ref{th10}, it will appear that we are unable to obtain a differential inequality of the form \eqref{estimateVuse} (see \eqref{dWp}--\eqref{estimdWp2} hereafter), but we can nevertheless get the ISS estimate of the form \eqref{estimateV}. Note that this feature does not occur when studying ISS for $H^{2}$ norms (see Definition \ref{ISSH2} and Proposition \ref{condrho2}) or even $H^{q}$ norms, where it is still possible to obtain the required differential inequality (e.g. \cite{yorgisbanda, PrieurFerrante}).
\label{rmkdiffeq}
\end{rmk}
\section{Proof of Theorem \ref{th10}} \label{sectionproofth10}
In this section we prove Theorem \ref{th10}.

\begin{proof}[Proof of Theorem \ref{th10}]
The proof is based on some of the computations provided in \cite{C1}. For more convenience we use here the same notations as in \cite{C1}.
We will deal with the exponential stability of the nonlinear system for the $C^{1}$ norm as it is the most difficult case. The exponential stability for the $L^{\infty}$ norm can be done exactly similarly and this will be detailed in the appendix \ref{semilinear}, while the extension of the proof to the $C^{q}$ norm simply follows by considering an augmented system (see Appendix \ref{Cpnorm}).\\[0.8em]
 The idea is first to approximate a basic $C^{1}$ Lyapunov function by a function equivalent to the $W^{1,p}$ norm, then to prove some estimate independent of $p$ on these functions and finally to let $p$ tend to infinity.
Let us consider a hyperbolic system of the form \eqref{sys10}, \eqref{bound1} with $A$ of class $C^{2}$, let $T>0$ and let $\mathbf{u}$ be a $C^{2}$ solution on $[0,T]\times[0,L]$ such that $\lVert\mathbf{u}(0,\cdot)\rVert_{C^{1}}\leq \varepsilon$, and $\lVert \mathbf{d} \rVert_{C^{1}}\leq \delta$ where $\varepsilon$ and $\delta$ are positive constants to be determined. From Theorem \ref{th0}, such a solution exists provided the initial condition $\mathbf{u}(0,\cdot)$ satisfies the $C^{1}$ compatibility conditions.
Here we assume a $C^{q+1}$ regularity for the computations but we will recover the result for $C^{q}$ functions by density later on. For any $p\in \mathbb{N}^{*}$ and any $\mu>0$, we define the following function:
\begin{equation}
W_{1,p}=\left(\int_{0}^{L}\sum\limits_{i=0}^{n}f_{i}^{p}e^{-2p \mu s_{i} x}u_{i}^{2p}(t,x)dx\right)^{1/2p}, \label{defW1p}
\end{equation}
where $f_{i}$ are positive constants to be chosen and $s_{i}=1$ for $i\in\{1,...,m\}$ and $s_{i}=-1$ for $i\in\{m+1,...n\}$.
Similarly we define
\begin{gather}
W_{2,p}=\left(\int_{0}^{L}\sum\limits_{i=0}^{n}f_{i}^{p}e^{-2p \mu s_{i} x}(E(\mathbf{u},x)\partial_{t}\mathbf{u}(t,x))_{i}^{2p}dx\right)^{1/2p},\label{defW2p}\\
W_{p}=W_{1,p}+W_{2,p}.
\label{defWp}
\end{gather}
where $(\mathbf{u},x)\rightarrow E(\mathbf{u},x)$ is $C^{1}$ and $E(\mathbf{u},x)$ is a matrix diagonalizing $A(\mathbf{u},x)$\footnote{this function and matrix always exists from the strict hyperbolicity of $A$, provided $\lVert\mathbf{u}\rVert_{C^{0}}$ is small enough, see \cite[Lemma 6.7]{BastinCoron1D}}, and
where, by a slight abuse of notation, we use the compact notation $\partial_{t}\mathbf{u}$ to denote the function of the single variable $x$ defined as:
\begin{equation}
\partial_{t}\mathbf{u}:=-A(\mathbf{u},x)\partial_{x}\mathbf{u}.
\label{defdt}
\end{equation}
Note that, for a function $\mathbf{u}\in C^{1}([0,L])$ of one variable, this definition is consistent with the value of $\partial_{t}\mathbf{u}$ for any solution of \eqref{defdt}. Therefore $W_{p}$ can be defined for $C^{1}$ functions of one variable and depends on time only indirectly through $\mathbf{u}$. This justifies the slight abuse of notation \eqref{defdt}.
Clearly, $W_{p}$ is an approximation of the following $C^{1}$ Lyapunov function candidate
\begin{equation}
V(\mathbf{u})=\lVert\sqrt{f_{1}}e^{-\mu s_{i} \cdot}u_{1},...,\sqrt{f_{n}}e^{-\mu s_{i} \cdot}u_{n}\rVert_{L^{\infty}}+\lVert \sqrt{f_{1}}e^{-\mu s_{i} \cdot}\textcolor{black}{(E\partial_{t}\mathbf{u})_{1}},...,\sqrt{f_{n}}e^{-\mu s_{i} \cdot}\textcolor{black}{(E\partial_{t}\mathbf{u})_{n}}\rVert_{L^{\infty}}.
\label{defV}
\end{equation}
Indeed, as long as $\mathbf{u}$ is $C^{1}$, $W_{p}\rightarrow V$ when $p\rightarrow +\infty$. In order to show that $V$ is an ISS Lyapunov function, we will first study $W_{p}$ and show some ISS estimate which has a limit when $p\rightarrow+\infty$ and then deduce an estimate on $V$. For simplicity in the following, \textcolor{black}{we will sometimes drop the dependance in $t$.} 
Let us start by studying $W_{1,p}$. Differentiating $W_{1,p}$ with respect to $t$ along the $C^{2}$ solutions of \eqref{sys10}, \eqref{bound1}, from
 \cite[(3.23)-(3.29), (3.42)-(3.45)]{CoronC1} (see also \cite[(5.19)-(5.30)]{C1}),
there exist $\varepsilon_{1}>0$, $p_{1}\in\mathbb{N}^{*}$ and $\alpha_{0}>0$ a constant independent of $\mathbf{u}$ and $p$  such that for any $p\geq p_{1}$ and any positive $\varepsilon \leq \varepsilon_{1}$
\begin{equation}
\frac{d W_{1,p}}{dt}\leq-\textcolor{black}{I_{2}}
-\frac{\mu\alpha_{0}}{2}W_{1,p}+CW_{1,p}\lVert\mathbf{u}\rVert_{C^{1}}
\label{dW1p}
\end{equation}
where $C>0$ is a positive constant independent of $p$ and $\mathbf{u}$,
\begin{equation}
I_{2}=\frac{W_{1,p}^{1-2p}}{2p}\left[\sum\limits_{i=1}^{n}\lambda_{i}(\mathbf{u},x)f_{i}^{p}u_{i}^{2p}e^{-2p\mu s_{i}x}\right]_{0}^{L}
\label{defI21}
\end{equation}
with
$\lambda_{i}(\mathbf{u},x), i\in\{1,...,n\},$ being the
eigenvalues of $A(\mathbf{u},x)$ such that $\lambda_i(0,x)=\Lambda_i(0), i\in\{1,...,n\}$.
Let us look at $I_{2}$ which is the only term where the boundary disturbance occurs. We know that under assumption \eqref{01condauxbords}, there exists a diagonal matrix $\Delta=\text{diag}(\Delta_{1},...,\Delta_{n})$ with positive components such that
\begin{equation}
\theta:=\lVert \Delta G'(0)\Delta^{-1}\rVert_{\infty}=\sum\limits_{j=1}^{n}\sum\limits_{i=1}^{n}|(G'(0))_{i,j}|\frac{\Delta_{i}}{\Delta_{j}}<\textcolor{black}{1}
\label{deftheta}
\end{equation}
where we recall that $l_{i}=L$ if $\Lambda_{i}>0$ and $l_{i}=0$ otherwise.
In fact this implies that there exists $\alpha>0$ such that
\begin{equation}
(1+\alpha)\theta < 1
\label{defalpha}
\end{equation}
Note that when knowing $G$ \textcolor{black}{and} $\Delta$
one can derive an explicit maximal bound on $\alpha$. Let us now define the vector $\mathbf{\xi}=(\xi_{1},...,\xi_{n})^{T}$ by
\begin{equation}
\xi_{i}=\left\{\begin{array}{l}
\Delta_{i} u_{i}(t,L)\text{ for }i\in[1,m],\\
\Delta_{i} u_{i}(t,0)\text{ for }i\in[m+1,n],
\end{array}\right.
\label{defxi}
\end{equation}
thus, the $\xi_{i}$ correspond to the outgoing information of the system. Note that from \eqref{bound1} $G$ is only a function of $\xi$. Therefore, to simplify the notations in the following computations we define the function $F=(F_{i})_{i\in\{1,...,n\}}$ such that
\begin{equation}
F(\xi)=G\begin{pmatrix}\mathbf{u}_{+}(L)\\
\mathbf{u}_{-}(0)\end{pmatrix}
\label{defF}
\end{equation}
 Using the boundary conditions \eqref{bound1}, \eqref{defI21} and \eqref{defxi}, $I_{2}$ becomes
\begin{equation}
\begin{split}
I_{2}=\frac{W_{1,p}^{1-2p}}{2p}&\left[
\sum\limits_{i=1}^{m}\lambda_{i}(\mathbf{u}(t,L),L)\frac{f_{i}^{p}}{\Delta_{i}^{2p}}\xi_{i}^{2p}e^{-2p\mu L}\right.\\
&+\sum\limits_{i=m+1}^{n}|\lambda_{i}(\mathbf{u}(t,0),0)|\frac{f_{i}^{p}}{\Delta_{i}^{2p}}\xi_{i}^{2p}\\
&-\sum\limits_{i=1}^{m}\lambda_{i}(\mathbf{u}(t,0),0)f_{i}^{p}\left(F_{i}(\mathbf{\xi})+d_{i}\right)^{2p}\\
&\left.-\sum\limits_{i=m+1}^{n}|\lambda_{i}(\mathbf{u}(t,L),L)|f_{i}^{p}\left(F_{i}(\mathbf{\xi})+d_{i}\right)^{2p}e^{2p\mu L}
\right]
\end{split}
\label{I22}
\end{equation}
where $\mathbf{d}=(d_{i})_{i\in\{1,..,n\}}$ is the boundary disturbance.
Now, as $\lambda_{i}$ are $C^{1}$ functions of $\mathbf{u}$, and using \eqref{bound1}, we have
\begin{equation}
\begin{split}
I_{2}\geq \frac{W_{1,p}^{1-2p}}{2p}&\left[
\sum\limits_{i=1}^{m}(\Lambda_{i}(L)+O(\xi))\frac{f_{i}^{p}}{\Delta_{i}^{2p}}\xi_{i}^{2p}e^{-2p\mu L}\right.\\
&+\sum\limits_{i=m+1}^{n}(|\Lambda_{i}(0)|+O(\xi))\frac{f_{i}^{p}}{\Delta_{i}^{2p}}\xi_{i}^{2p}\\
&-\sum\limits_{i=1}^{m}\left(\Lambda_{i}(0)+O\left(\sum\limits_{i=1}^{n}(|F_{i}(\xi)|+|d_{i}|)\right)\right)f_{i}^{p}\left(F_{i}(\mathbf{\xi})+d_{i}\right)^{2p}\\
&\left.-\sum\limits_{i=m+1}^{n}\left(|\Lambda_{i}(L)|+O(\sum\limits_{i=1}^{n}(|F_{i}(\xi)|+|d_{i}|))\right)f_{i}^{p}e^{2p\mu L}\left(F_{i}(\mathbf{\xi})+d_{i}\right)^{2p}
\right]
\end{split}
\label{l23}
\end{equation}
Here the $O$ \textcolor{black}{represents a continuous function independent of $p$ such that $O(x)/|x|$ is bounded when $|x|$ tends to $0$}. As we have a bound on $\mathbf{u}$, hence on $F_{i}(\mathbf{\xi})$, and a bound on $d_{i}$,
a natural \textcolor{black}{idea would be} to develop $\left(F_{i}(\mathbf{\xi})+d_{i}\right)^{2p}$ and bound each of the terms that appear. However, this would pose a problem when making $p$ \textcolor{black}{tends} to $+\infty$ as we would end up with an infinite number of small terms and their sum might not be small anymore.
Note that this problem does not occur if one want to transpose the same type of result for the $H^{q}$ norm, where the Lyapunov function candidate would have a form similar to $\textcolor{black}{W}_{p}$ with $p=1$ \textcolor{black}{and would be $\sum_{k=1}^{q+1} W_{k,1}^2$, where $W_{k,p}$ is defined in the Appendix (see \eqref{Wkpappendix})}
and thus the number of \textcolor{black}{terms} would remain finite. In order to avoid this problem, \textcolor{black}{we introduce the following estimate for any $(a,d)\in \mathbb{R}^{2}$,
\begin{equation}
(a+d)^{2p}\leq \left(1+\alpha\right)^{2p}a^{2p}+\left(1+\frac{1}{\alpha}\right)^{2p}d^{2p}.
\label{estimad}
\end{equation}
Observe that this estimate holds for any positive $\alpha$ because we can} separate the possible situations in two cases, depending on which term is dominant between $a$ and $d$.
If $|a|\alpha\leq |d|$, then
\begin{equation}
(a+d)^{2p}\leq \left(1+\frac{1}{\alpha}\right)^{2p}d^{2p}.
\end{equation}
If $|a|\alpha> |d|$, then
\begin{equation}
(a+d)^{2p}\leq \left(1+\alpha\right)^{2p}a^{2p}.
\end{equation}
So overall \eqref{estimad} holds in any cases. For simplicity we also denote $d_{\max}(t)=\sup\limits_{i}|d_{i}(t)|$ and we recall the notation $l_{i}$ defined in Theorem \ref{th1} by $l_{i}:=L$ if $1\leq i\leq m$ and $l_{i}:=0$ if $m+1\leq i\leq n$. Therefore, using \eqref{estimad} in \eqref{l23}, 
 we get
\begin{equation}
\begin{split}
I_{2}\geq \frac{W_{1,p}^{1-2p}}{2p}&\left[
\sum\limits_{i=1}^{m}(\Lambda_{i}(L)+O(\xi))\frac{f_{i}^{p}}{\Delta_{i}^{2p}}\xi_{i}^{2p}e^{-2p\mu L}\right.\\
&+\sum\limits_{i=m+1}^{n}(|\Lambda_{i}(0)|+O(\xi))\frac{f_{i}^{p}}{\Delta_{i}^{2p}}\xi_{i}^{2p}\\
&-\sum\limits_{i=1}^{n}\left(|\Lambda_{i}(L-l_{i})|+O\left(|F(\xi)|+d_{\max}\right)\right)f_{i}^{p}e^{2p\mu (L-l_{i})}
\left(1+\alpha\right)^{2p}F_{i}^{2p}(\mathbf{\xi})\\
&\left.-\sum\limits_{i=1}^{n}\left(|\Lambda_{i}(L-l_{i})|+O\left(|F(\xi)|+d_{\max}\right)\right)f_{i}^{p}e^{2p\mu (L-l_{i})}\left(1+\frac{1}{\alpha}\right)^{2p}d_{\max}^{2p}
\right].
\end{split}
\label{I24}
\end{equation}
Now, we would like to deal with the negative terms. As we have separated
the influence of $\xi$ and $d_{\max}$, we want to compensate all negative terms in $F_{i}^{2p}(\xi)$ by 
using the two first positive terms of \eqref{I24} so that the sum is nonnegative. Note that the \textcolor{black}{cross} terms induced by the two last sums do not bring any difficulty as both $|\xi|$ (and therefore $|F(\xi)|$) and $d_{\max}$ can be made as small as desired by choosing $\varepsilon$ small enough (see \eqref{estimate} and \eqref{defxi}). Using \eqref{bound1}, \eqref{defF} and the fact that $G$ is $C^{1}$, we have, denoting $\textcolor{black}{J}:=G'(0)$,
\begin{equation}
F_{i}(\xi)=\sum\limits_{j=1}^{n}\textcolor{black}{J}_{i,j}\frac{\xi_{j}}{\Delta_{j}}+o\left(\xi\right),
\label{devlim}
\end{equation}
For a given $t\geq0$, there exists $i_{0}$ such that $\max_{i}(|\xi_{i}(t)|)=|\xi_{i_{0}}|$, thus, using the fact that $\sup\limits_{i}|F_{i}(\mathbf{\xi})|=O(\xi)$,
\begin{equation}
\begin{split}
&\sum\limits_{i=1}^{n}(|\Lambda_{i}(l_{i})|+O(\xi))\frac{f_{i}^{p}}{\Delta_{i}^{2p}}\xi_{i}^{2p}e^{-2p\mu l_{i}}
-\sum\limits_{i=1}^{n}\left(\Lambda_{i}(L-l_{i})+O(|\xi|+d_{\max})\right)\left(f_{i}^{p}e^{2p\mu (L-l_{i})}\right)(1+\alpha)^{2p}|F_{i}(\mathbf{\xi})|^{2p}\\
&\geq (\min\limits_{i}|\Lambda_{i}(l_{i})|+O(\xi))\min\limits_{i}\left(\frac{\textcolor{black}{f_{i}}}{\Delta_{i}^{2}}\right)^{p}\xi_{i_{0}}^{2p}e^{-2p\mu L}\\
&-n\textcolor{black}{\left(\sup\limits_{i}|\Lambda_{i}(L-l_{i})|+O(|\xi|+d_{\max})\right)}\sup\limits_{i}\left(\frac{\textcolor{black}{f_{i}}}{\Delta_{i}^{2}}\right)^{p}e^{2p\mu L}(1+\alpha)^{2p}
\left(\sum\limits_{j=1}^{n}|\textcolor{black}{J}_{i,j}|\frac{\Delta_{i}}{\Delta_{j}}\right)^{2p}\textcolor{black}{(1+o(1))^{2p}}|\mathbf{\xi}_{\textcolor{black}{i_{0}}}|^{2p}\\
&\geq
\left[(\min\limits_{i}|\Lambda_{i}(l_{i})|+O(\xi))\min\limits_{i}\left(\frac{\textcolor{black}{f_{i}}}{\Delta_{i}^{2}}\right)^{p}e^{-2p\mu L}\right.\\
&\left.-n\textcolor{black}{\left(\sup\limits_{i}|\Lambda_{i}(L-l_{i})|+O(|\xi|+d_{\max})\right)}\sup\limits_{i}\left(\frac{\textcolor{black}{f_{i}}}{\Delta_{i}^{2}}\right)^{p}e^{2p\mu L}(1+\alpha)^{2p}
\theta^{2p}\textcolor{black}{(1+o(1))^{2p}}\right]\mathbf{\xi}_{\textcolor{black}{i_{0}}}^{2p},
\label{I25b}
\end{split}
\end{equation}
\textcolor{black}{where $o(1)$ refers to a function that tends to $0$ when $|\xi|$ goes to $0$.}
By definition of $\alpha$, given in \eqref{defalpha}, there exists $p_{2}\in\mathbb{N}^{*}$ and $\mu_{2}>0$ such that for any $p\geq p_{2}$ and any $\mu\in(0,\mu_{2})$,
\begin{equation}
\begin{split}
&\textcolor{black}{n^{1/2p}\left(\sup\limits_{i}|\Lambda_{i}(L-l_{i})|+O(|\xi|+d_{\max})\right)^{1/2p}e^{\mu L}(1+\alpha)\theta}\\
&<e^{-4\mu L}(\min\limits_{i}|\Lambda_{i}(l_{i})|+O(\xi))^{1/2p}
\end{split}
\end{equation}
\textcolor{black}{Thus, from the definition of $o(1)$, using \eqref{estimate} and \eqref{defxi}, there exist $\varepsilon_{4}>0$ and $\delta_{1}>0$ such that for any $\varepsilon\in(0,\varepsilon_{4})$ and $\delta\in(0,\delta_{1})$,
\begin{equation}
\begin{split}
&\textcolor{black}{n^{1/2p}\left(\sup\limits_{i}|\Lambda_{i}(L-l_{i})|+O(|\xi|+d_{\max})\right)^{1/2p}e^{\mu L}(1+\alpha)(1+o(1))\theta}\\
&<e^{-4\mu L}(\min\limits_{i}|\Lambda_{i}(l_{i})|+O(\xi))^{1/2p}
\end{split}
\label{implyhomogene}
\end{equation}
}
\textcolor{black}{Thus, if we choose $f_{i}=\Delta_{i}^{2}$, \eqref{implyhomogene}}implies that
\begin{equation}
\begin{split}
&\left[(\min\limits_{i}|\Lambda_{i}(l_{i})|+O(\xi))\min\limits_{i}\left(\frac{\textcolor{black}{f_{i}}}{\Delta_{i}^{2}}\right)^{p}e^{-2p\mu L}\right.\\
&\left.-n\textcolor{black}{\left(\sup\limits_{i}|\Lambda_{i}(L-l_{i})|+O(|\xi|+d_{\max})\right)}\sup\limits_{i}\left(\frac{\textcolor{black}{f_{i}}}{\Delta_{i}^{2}}\right)^{p}e^{2p\mu L}(1+\alpha)^{2p}(1+o(1))^{2p}
\theta^{2p}\right]\mathbf{\xi}_{0}^{2p}>0,
\end{split}
\end{equation}
and hence, using \eqref{I25b},
\begin{equation}
\begin{split}
I_{2}\geq \frac{W_{1,p}^{1-2p}}{2p}\left[
-\left(\textcolor{black}{\sum\limits_{i=1}^{n}(|\Lambda_{i}(L-l_{i})|+O(|\xi|+d_{\max}))}\textcolor{black}{\Delta_{i}^{2p}}e^{2p\mu (L-l_{i})}\right)\left(1+\frac{1}{\alpha}\right)^{2p}d_{\max}^{2p}(t)
\right].
\end{split}
\end{equation}
This implies, using \eqref{dW1p}, that
\begin{equation}
\begin{split}
\frac{d W_{1,p}}{dt}\leq&-\frac{\mu\alpha_{0}}{2}W_{1,p}+CW_{1,p}\lVert\mathbf{u}\rVert_{C^{1}}\\
&+\frac{W_{1,p}^{1-2p}}{2p}\left[
\left(\textcolor{black}{\sum\limits_{i=1}^{n}|\Lambda_{i}(L-l_{i})|}\textcolor{black}{\Delta_{i}^{2p}}e^{2p\mu (L-l_{i})}\right)\left(1+\frac{1}{\alpha}\right)^{2p}d_{\max}^{2p}(t)\left(1+\textcolor{black}{O(\lVert \mathbf{u}(t,\cdot)\rVert_{C^{0}}+d_{\max}})\right)\right].
\end{split}
\label{dW1p2}
\end{equation}
Similarly we can show the following Lemma.
\begin{lem}
There exists $p_{3}\in\mathbb{N}^{*}$, $\mu_{3}>0$ such that for any $p\geq p_{3}$ and any $\mu\in(0,\mu_{3})$
\begin{equation}
\begin{split}
&\frac{d W_{2,p}}{dt}\leq-\frac{\mu\alpha_{0}}{2}W_{2,p}+CW_{2,p}\lVert\mathbf{u}\rVert_{C^{1}}\\
&+\frac{W_{2,p}^{1-2p}}{2p}\left[
\left(\textcolor{black}{\sum\limits_{i=1}^{n}|\Lambda_{i}(L-l_{i})|}\textcolor{black}{\Delta_{i}^{2p}}e^{2p\mu (L-l_{i})}\right)\left(1+\frac{1}{\alpha}\right)^{2p}(d'_{\max}(t))^{2p}\left(1+\textcolor{black}{O(\lVert \mathbf{u}(t,\cdot)\rVert_{C^{1}}+d_{\max}+d'_{\max}})\right)\right],
\end{split}
\end{equation}
where $d_{\max}'(t)=:\sup\limits_{i}|d_{i}'(t)|$.
\label{lemW2p}
\end{lem}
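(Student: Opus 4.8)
The plan is to repeat the analysis just carried out for $W_{1,p}$, applied now to the diagonalized time derivative $\mathbf{v}:=E(\mathbf{u},x)\partial_{t}\mathbf{u}$, whose components $v_{i}=(E(\mathbf{u},x)\partial_{t}\mathbf{u})_{i}$ are precisely the quantities integrated in the definition \eqref{defW2p} of $W_{2,p}$. The point is that $\partial_{t}\mathbf{u}$ solves a hyperbolic system with the \emph{same} principal part as \eqref{sys10}, so that $\mathbf{v}$ obeys equations with the same characteristic speeds $\lambda_{i}(\mathbf{u},x)$ to leading order, and the boundary coupling is governed by the same matrix $J=G'(0)$, the disturbance being now $\mathbf{d}'(t)$ instead of $\mathbf{d}(t)$.

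First I would differentiate $W_{2,p}$ in $t$ along the $C^{2}$ solutions. By the computations of \cite[(3.42)--(3.45)]{CoronC1} (see also \cite[(5.19)--(5.30)]{C1}), the interior contribution produces, for $p$ large and $\mu$ small, the same dissipation and remainder as in \eqref{dW1p}, namely a bound of the form $-I_{2}'-\frac{\mu\alpha_{0}}{2}W_{2,p}+CW_{2,p}\lVert\mathbf{u}\rVert_{C^{1}}$, where $I_{2}'$ is the boundary term analogous to \eqref{defI21} with $u_{i}$ replaced by $v_{i}$. The only difference is that the remainder is controlled by $\lVert\mathbf{u}\rVert_{C^{1}}$ rather than $\lVert\mathbf{u}\rVert_{C^{0}}$, because the relation \eqref{defdt}, $\partial_{t}\mathbf{u}=-A(\mathbf{u},x)\partial_{x}\mathbf{u}$, makes $\mathbf{v}$ and its governing source terms depend on $\partial_{x}\mathbf{u}$.

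The key step is the boundary term $I_{2}'$. Since $A(0,x)=\Lambda(x)$ is diagonal, one may choose $E(0,x)=\mathrm{Id}$, so that $v_{i}=\partial_{t}u_{i}+O(\mathbf{u})$ and the outgoing components of $\mathbf{v}$ coincide to leading order with those of $\partial_{t}\mathbf{u}$. Differentiating the boundary condition \eqref{bound1} in time yields $\partial_{t}\mathbf{u}_{+}(t,0)$ and $\partial_{t}\mathbf{u}_{-}(t,L)$ as $G'(\cdot)$ applied to the outgoing time derivatives plus $\mathbf{d}'(t)$, and $G'$ evaluated at the boundary state equals $J=G'(0)$ up to an $O(\mathbf{u})$ correction. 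Hence $I_{2}'$ has exactly the structure of \eqref{I22}--\eqref{l23}, with $\xi_{i}$ replaced by the corresponding outgoing components of $\mathbf{v}$, the expression $F_{i}(\xi)$ replaced by its analogue carrying the same matrix $J$ as in \eqref{devlim}, and $d_{i}$ replaced by $d_{i}'(t)$. I would then apply the same elementary inequality \eqref{estimad} with the same $\alpha$ fixed in \eqref{defalpha}, so that after splitting the $(\,\cdot+d_{i}')^{2p}$ terms the negative $\mathbf{v}$-contributions are compensated by the two outgoing positive sums exactly as in \eqref{I25b}--\eqref{implyhomogene}; this compensation uses only $\theta=\lVert\Delta G'(0)\Delta^{-1}\rVert_{\infty}<1$ from \eqref{deftheta} and the choice $f_{i}=\Delta_{i}^{2}$, both already in force. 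This produces constants $p_{3}$ and $\mu_{3}$ and the asserted estimate, with $d_{\max}$ replaced by $d'_{\max}$ in the leading disturbance factor and with the error multiplier $1+O(\lVert\mathbf{u}\rVert_{C^{1}}+d_{\max}+d'_{\max})$ collecting the $O(\mathbf{u})$ corrections to $E$ and to $G'$ together with the cross-terms between $\mathbf{v}$ and $\mathbf{d}'$.

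The main obstacle is the boundary bookkeeping: one must check that passing from $\mathbf{u}$ to $\mathbf{v}=E\partial_{t}\mathbf{u}$ preserves the coupling matrix $J=G'(0)$, so that the \emph{same} $\theta$ and hence the \emph{same} compensation work, while introducing $\mathbf{d}'$ only linearly. All corrections arising from the $\mathbf{u}$-dependence of $E$, of the eigenvalues $\lambda_{i}$, and of $G'$ must remain of order $O(\lVert\mathbf{u}\rVert_{C^{1}}+d_{\max}+d'_{\max})$ and be absorbable, for $\varepsilon$ and $\delta$ small enough, without destroying the strict inequality \eqref{defalpha}; both $d_{\max}$ and $d'_{\max}$ now enter because the boundary state depends on $\mathbf{d}$ while its time derivative depends on $\mathbf{d}'$. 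Granting this, the remainder of the argument is a verbatim repetition of the $W_{1,p}$ computation leading to \eqref{dW1p2}.
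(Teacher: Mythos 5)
Your proposal is correct and follows essentially the same route as the paper's own proof in Appendix~\ref{appendixW2p}: differentiate $W_{2,p}$, split off the boundary term and the interior term coming from the $\mathbf{u}$-dependence of $A$ and $E$, differentiate the boundary condition \eqref{bound1} in time so that $G'$ at the boundary state is $G'(0)+o(1)$ and the disturbance becomes $\mathbf{d}'$, and then repeat the $W_{1,p}$ compensation argument with the same $\Delta$, $\theta$ and $\alpha$. The paper simply makes explicit the analogues of $\xi$, $F$ and \eqref{devlim} for $E\partial_t\mathbf{u}$ (its $F_2$ and $I_{21}$), exactly as you describe.
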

The proof is postponed to Appendix~\ref{appendixW2p}. Using \eqref{defWp}, \eqref{dW1p2} and Lemma \ref{lemW2p}, we have then
\begin{equation}
\begin{split}
\frac{dW_{p}}{dt}&\leq-\frac{\mu\alpha_{0}}{2}W_{p}+CW_{p}\lVert\mathbf{u}\rVert_{C^{1}}\\
&+\frac{W_{p}^{1-2p}}{2p}\left[
\left(\textcolor{black}{\sum\limits_{i=1}^{n}|\Lambda_{i}(L-l_{i})|}\textcolor{black}{\Delta^{2p}}e^{2p\mu (L-l_{i})}\right)\left(1+\frac{1}{\alpha}\right)^{2p}(d_{\max}^{2p}(t)\right.\\
&\left.+(d'_{\max}(t))^{2p})\left(1+\textcolor{black}{O(\lVert \mathbf{u}(t,\cdot)\rVert_{C^{1}}+d_{\max}+d'_{\max}})\right)
\right].
\end{split}
\end{equation}
\textcolor{black}{Using \eqref{estimate},} there exists $\varepsilon_{4}>0$ such that for any $\varepsilon\in(0,\varepsilon_{4})$ we have $C\lVert\mathbf{u}\rVert_{C^{1}}<\mu\alpha_{0}/8+CC_{1}(T)(\sup_{\tau\in[0,t]}(d_{\max}(\tau))+\sup_{\tau\in[0,t]}(d'_{\max}(\tau)))$. Besides, recall that $\lVert\mathbf{d}\rVert_{C^{1}}\leq \delta$ where $\delta>0$ has still to be chosen. Thus, we can select $\delta>0$ such that  $CC_{1}(T)\lVert\mathbf{d}\rVert_{C^{1}}\leq \mu\alpha_{0}/8$, thus $CC_{1}(T)(\sup_{\tau\in[0,t]}(d_{\max}(\tau))+\sup_{\tau\in[0,t]}(d'_{\max}(\tau)))\leq \mu\alpha_{0}/8$ and $C\lVert\mathbf{u}\rVert_{C^{1}}<\mu\alpha_{0}/4$.
Hence
\begin{equation}
\begin{split}
\frac{dW_{p}}{dt} &\leq-\frac{\mu\alpha_{0}}{4}W_{p}\\
&+\frac{W_{p}^{1-2p}}{2p}\left[
I_{4}\left(1+\frac{1}{\alpha}\right)^{2p}(d_{\max}^{2p}(t)+(d'_{\max}(t))^{2p})\left(1+\textcolor{black}{O(\lVert \mathbf{u}(t,\cdot)\rVert_{C^{1}}+d_{\max}+d'_{\max})}\right)\right],
\end{split}
\label{dWp}
\end{equation}
where we set 
 $I_{4}:=\left(\textcolor{black}{\sum\limits_{i=1}^{n}}\Lambda_{i}(L-l_{i})|\textcolor{black}{\Delta^{2p}}e^{2p\mu (L-l_{i})}\right)$.
\textcolor{black}{
Multiplying on both sides by $2pW_{p}^{2p-1}$, we get
\begin{equation}
\begin{split}
\frac{d(W_{p}^{2p})}{dt} &\leq-\frac{2p\mu\alpha_{0}}{4}W_{p}^{2p}\\
&+\left[
I_{4}\left(1+\frac{1}{\alpha}\right)^{2p}(d_{\max}^{2p}(t)+(d'_{\max}(t))^{2p})\left(1+O(\lVert \mathbf{u}(t,\cdot)\rVert_{C^{1}}+d_{\max}+d'_{\max})\right)\right].
\end{split}
\label{estimdWp2}
\end{equation}
Thus using now Gronwall Lemma, we get
\begin{equation}
\begin{split}
&W_{p}(\tau)\leq \left(e^{-2p\frac{\mu\alpha_{0}}{4} (\tau-s)}W_{p}(s)^{2p}\right.\\
&\left.+\int_{s}^{\tau}e^{-2p\frac{\mu\alpha_{0}}{4} (\tau-\textcolor{black}{v})}\left[I_{4}\left(1+\frac{1}{\alpha}\right)^{2p}(d_{\max}^{2p}(v)+(d'_{\max}(v))^{2p})\left(1+O(\lVert\mathbf{u}(v,\cdot)\rVert_{C^{1}}+d_{\max}(v)+d'_{\max}(v))\right)\right] dv\right)^{1/2p},\\
&\text{ for any }0\leq s\leq \tau \leq t.
\end{split}
\label{gronwall}
\end{equation}
Now, as $x\rightarrow x^{1/2p}$ is a concave function,
we have
\begin{equation}
\begin{split}
&W_{p}(t)\leq e^{-\frac{\mu\alpha_{0}}{4} (\textcolor{black}{t}-s)}W_{p}(s)\\
&+\left(\int_{s}^{t}e^{-2p\frac{\mu\alpha_{0}}{4} (t-\textcolor{black}{v})}\left[I_{4}\left(1+\frac{1}{\alpha}\right)^{2p}(d_{\max}^{2p}(v)+(d'_{\max}(v))^{2p})\left(1+O(\lVert\mathbf{u}(v,\cdot)\rVert_{C^{1}}+d_{\max}(v)+d'_{\max}(v))\right)\right] dv\right)^{1/2p}.
\end{split}
\label{gronwall2}
\end{equation}
We would like now to let $p$ go to $+\infty$ to recover the basic $C^{1}$ Lyapunov function $V$.
To do so, using the definition of $I_{4}$
together with the concavity of $1/2p$, there exists a constant $C_{3}$ independent of $p$ such that
\begin{equation}
\begin{split}
&\left(\int_{s}^{t}e^{-2p\frac{\mu\alpha_{0}}{4} (t-\textcolor{black}{v})}\left[I_{4}\left(1+\frac{1}{\alpha}\right)^{2p}(d_{\max}^{2p}(v)+(d'_{\max}(v))^{2p})\left(1+O(\lVert\mathbf{u}(v,\cdot)\rVert_{C^{1}}+d_{\max}(v)+d'_{\max}(v))\right)\right]\right)\\
&\leq C_{3}^{1/2p}\left(\int_{s}^{t}e^{-2p\frac{\mu\alpha_{0}}{4} (t-\textcolor{black}{v})}\left(\sum\limits_{i=1}^{n}\textcolor{black}{\Delta^{2p}}e^{2p\mu (L-l_{i})}\right)\left(1+\frac{1}{\alpha}\right)^{2p}(d_{\max}^{2p}(v)+(d'_{\max}(v))^{2p})dv\right)^{1/2p}.
\end{split}
\label{estimateI4I5p}
\end{equation}
Recall now that for a continuous function $a$, we have $(\int_{s}^{t}\sum_{i=1}^{n} |a_{i}|^{2p}(v)dv)^{1/2p}\xrightarrow[p\rightarrow +\infty]{}\max_{i, x\in[s,t]}\textcolor{black}{|a_{i}|}$,
therefore
\begin{equation}
\begin{split}
&C_{3}^{1/2p}\left(\int_{s}^{t}e^{-2p\frac{\mu\alpha_{0}}{4} (t-\textcolor{black}{v})}\left(\sum\limits_{i=1}^{n}\textcolor{black}{\Delta^{2p}}e^{2p\mu (L-l_{i})}\right)\left(1+\frac{1}{\alpha}\right)^{2p}d_{\max}^{2p}(v)dv\right)^{1/2p}\\
&\xrightarrow[p\rightarrow +\infty]{}\left(1+\frac{1}{\alpha}\right)\max\limits_{i\in\{1,...,n\}}\left(\textcolor{black}{\Delta_{i}}e^{\mu (L-l_{i})}\right)\sup\limits_{v\in[s,t]}\left( e^{-\frac{\mu\alpha_{0}}{4} (t-v)}|d_{\max}(v)|\right).
\end{split}
\label{boundI4p}
\end{equation}
and the same holds with $d_{\max}'$ instead of $d_{\max}$.
}
Using now \eqref{gronwall2} and \eqref{boundI4p}, we obtain by letting $p$ go to $+\infty$
\begin{equation}
\begin{split}
&V(t)\leq e^{-\gamma (t-s)}V(s)\\
&+\left(1+\frac{1}{\alpha}\right)\max\limits_{i\in\{1,...,n\}}\left(\textcolor{black}{\Delta_{i}}e^{\mu (L-l_{i})}\right)\left(\sup\limits_{\tau\in[s,t]}(\textcolor{black}{e^{-\gamma(t-\tau)}}|d_{\max}(\tau)|)+\sup\limits_{\tau\in[s,t]}(\textcolor{black}{e^{-\gamma(t-\tau)}}|d'_{\max}(\tau)|)\right),
\end{split}
\label{ISSth1}
\end{equation}
which is exactly the \textcolor{black}{desired ISS estimate}, \textcolor{black}{with $\gamma=\mu\alpha_{0}/4 $}.
We conclude by saying that $V$ is equivalent to the $C^{1}$ norm of $\mathbf{u}$ as, from \eqref{defV}, there exist positive constants $C_{\min}$ and $C_{\max}$ such that
\begin{equation}
C_{\min}\lVert\mathbf{u}\rVert_{C^{1}}\leq V\leq C_{\max}\lVert\mathbf{u}\rVert_{C^{1}},
\end{equation}
and $C_{\min}$ and $C_{\max}$ can be deduced explicitly from the $(f_{i}(L-l_{i}))_{i\in\{1,..,n\}}$ and the parameters of the system \eqref{sys1}. Therefore, we have
\begin{equation}
\begin{split}
&\lVert \mathbf{u}(t,\cdot)\rVert_{C^{1}}\leq \frac{C_{\max}}{C_{\min}}\lVert\mathbf{u}(s,\cdot)\rVert_{C^{1}}e^{-\textcolor{black}{\gamma} (t-s)}\\
&+\frac{1}{C_{\min}}\left(1+\frac{1}{\alpha}\right)\max\limits_{i\in\{1,...,n\}}\left(\textcolor{black}{\Delta_{i}}e^{\mu (L-l_{i})}\right)\left(\sup\limits_{\tau\in[s,t]}(\textcolor{black}{e^{-\textcolor{black}{\gamma}(t-\tau)}}|d_{\max}(\tau)|)+\sup\limits_{\tau\in[s,t]}(\textcolor{black}{e^{-\textcolor{black}{\gamma}(t-\tau)}}|d'_{\max}(\tau)|)\right).
\end{split}
\label{ISSth110}
\end{equation}
Finally, this estimate is true for solutions $\mathbf{u}\in C^{2}$  with $A$ which is a $C^{2}$ function, but it can be extended by density to solutions in $C^{1}$ with $A$ of class $C^{1}$ (see \cite{C1} or \cite[Lemma 4.2]{Burgers} where the same argument is detailed precisely with the $H^{2}$ norm).
\begin{rmk}\label{rmkconstants}
Note that from \eqref{defV} and the choice $f_{i}=\Delta_{i}^{2}$, the constants $C_{\min}$ and $C_{\max}$ can be directly computed from $\Delta$. And from \eqref{deftheta} and \eqref{defalpha}, $\alpha$ can be directly computed from $\Delta$ and $G'(0)$. Thus from \eqref{ISSth110} the gains of the ISS estimate can be explicitly computed from $\Delta$ and the system parameters.
\end{rmk}
\end{proof}
\section{Case $n=2$ and comparison with existing conditions}
In this section we prove Lemma \ref{prop2} and Proposition \ref{th2}. We first introduce a proposition which was shown in \cite[Theorem 3.2]{C1_22}\footnote{The conditions stated in \cite[Theorem 3.2]{C1_22} are in fact different than \eqref{01cond111}--\eqref{01condauxbords}, but are shown to be equivalent in the same paper (see \cite[Section 4]{C1_22})} \textcolor{black}{and} simplifies the condition of Theorem \ref{th1} in the case of a $2\times 2$ system.
\begin{prop}
Let a system be of the form \eqref{linear1}, \eqref{linearbound}, with $a$ and $b$ two continuous functions on $[0,1]$ and denote $M:=\begin{pmatrix}0&a\\b&0\end{pmatrix}$ and $G(\mathbf{u})=\begin{pmatrix}0&k_{1}\\k_{2}&0\end{pmatrix}\mathbf{u}$.
Then the two following are equivalent:
\begin{itemize}
\item Condition \eqref{01cond111}--\eqref{01condauxbords} holds.
\item There exists a solution $\eta$ on $[0,1]$ to
\begin{equation}
\left\{\begin{split}
\eta'&=\left|\frac{a}{\Lambda_{1}}\right|+\left|\frac{b}{|\Lambda_{2}|}\right|\eta^{2},\\
\eta(0)&=|k_{1}|
\end{split}\right.
\label{condn1}
\end{equation}
such that
\begin{equation}
\eta(1)\textcolor{black}{<} |k_{2}|^{-1}.
\label{condn2}
\end{equation}
\end{itemize}
\label{prop22}
\end{prop}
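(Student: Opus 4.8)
The plan is to rewrite conditions \eqref{01cond111}--\eqref{01condauxbords} for $n=2$ as scalar statements about the single Riccati equation \eqref{condn1}, exploiting the anti-diagonal structure of $M$ and $G'(0)$; this is essentially the route of \cite[Section~4]{C1_22}. Specializing Theorem~\ref{th1} to $\Lambda_1>0>\Lambda_2$, $M_{11}=M_{22}=0$, $M_{12}=a$, $M_{21}=b$ and $L=1$, the interior condition \eqref{01cond111} becomes the pair $\Lambda_1 f_1'\le -2|a|\,f_1^{3/2}/\sqrt{f_2}$ and $\Lambda_2 f_2'\le -2|b|\,f_2^{3/2}/\sqrt{f_1}$ for positive $f_1,f_2$. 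Introducing $\eta:=\sqrt{f_2/f_1}$, a short computation (using $f_1^{3/2}/\sqrt{f_2}=f_1/\eta$ and $f_2^{3/2}/\sqrt{f_1}=\eta^3 f_1$, and dividing the two resulting bounds on $f_1'/f_1$ and $f_2'/f_1$ appropriately) turns this pair into the single inequality $\eta'\ge |a|/\Lambda_1+(|b|/|\Lambda_2|)\eta^2$; that is, every admissible $(f_1,f_2)$ produces a positive supersolution of \eqref{condn1}. Conversely, from any positive supersolution $\eta$ I would recover an admissible pair by solving the linear ODE $f_1'/f_1=-2|a|/(\Lambda_1\eta)$ and setting $f_2=\eta^2 f_1$, the second inequality then being exactly the supersolution property. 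Since the right-hand side of \eqref{condn1} is nondecreasing in $\eta$, a standard comparison argument shows that positive $f_1,f_2$ exist on $[0,1]$ if and only if the \emph{exact} solution of \eqref{condn1} issued from the relevant initial value does not blow up before $x=1$.

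Next I would analyze the boundary inequality \eqref{01condauxbords}. Because $G'(0)$ is anti-diagonal, $\lVert\Delta G'(0)\Delta^{-1}\rVert_\infty=\max(|k_1|/\rho,\,|k_2|\rho)$ with $\rho:=\Delta_2/\Delta_1$ the only free parameter, while $l_1=1$, $l_2=0$ give that the numerator of the right-hand side is proportional to $\min(f_1(1),f_2(0)/\rho^2)$ and the denominator to $\max(f_1(0),f_2(1)/\rho^2)$. The crucial point is that $\eta(0)=\sqrt{f_2(0)/f_1(0)}$ and $\eta(1)=\sqrt{f_2(1)/f_1(1)}$, so that choosing $\rho$ together with the scale-invariant normalization of $(f_1,f_2)$ decouples \eqref{01condauxbords} into a \emph{balance} at $x=0$ and one at $x=1$. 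Matching the incoming and outgoing weights through the reflection coefficient $k_1$ at $x=0$ forces a lower bound $\eta(0)\ge|k_1|$, while the $x=1$ balance is favourable precisely when $f_2(1)/f_1(1)<|k_2|^{-2}$, i.e. $\eta(1)<|k_2|^{-1}$. By the monotone dependence of the Riccati flow on its datum, a smaller $\eta(0)$ yields a smaller $\eta(1)$ (and is least prone to blow-up), so the optimal admissible choice is the smallest one, $\eta(0)=|k_1|$, which is exactly the initial condition in \eqref{condn1}; the remaining requirement is then \eqref{condn2}. This gives both implications of the stated equivalence.

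I expect the boundary step to be the main obstacle. Converting the single combined inequality \eqref{01condauxbords} into the two clean endpoint conditions requires a careful optimization over $\rho$ (and the normalization), tracking which term attains the inner $\min$ and $\max$ in each regime; one must show both that $\eta(0)\ge|k_1|$ is genuinely forced (necessity) and that $\eta(0)=|k_1|$ is optimal (sufficiency), and one must confirm that the strictness in \eqref{condn2} corresponds to the strict inequality in \eqref{01condauxbords}. The interior reduction, by contrast, is routine once the substitution $\eta=\sqrt{f_2/f_1}$ is identified and the reconstruction of $(f_1,f_2)$ from $\eta$ is carried out. Since the whole equivalence is precisely \cite[Theorem~3.2 and Section~4]{C1_22}, I would at this point either invoke that reference directly or reproduce the finite case analysis of $\rho$ sketched above to close the proof.
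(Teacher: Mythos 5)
You should first note that the paper does not actually prove Proposition \ref{prop22}: it is imported wholesale from \cite[Theorem 3.2]{C1_22}, with a footnote pointing to \cite[Section 4]{C1_22} for the equivalence between the $(f_1,f_2)$-formulation \eqref{01cond111}--\eqref{01condauxbords} and the $\eta$-formulation \eqref{condn1}--\eqref{condn2}. So your fallback option --- invoking that reference --- is exactly what the authors do, and is acceptable. Your interior reduction is moreover correct and is indeed the mechanism behind the cited result: with $\Lambda_1>0>\Lambda_2$ and $M$ anti-diagonal, \eqref{01cond111} reads $f_1'/f_1\le -2|a|/(\Lambda_1\eta)$ and $f_2'/f_2\ge 2|b|\eta/|\Lambda_2|$ with $\eta=\sqrt{f_2/f_1}$, and subtracting gives $2\eta'/\eta\ge 2|a|/(\Lambda_1\eta)+2|b|\eta/|\Lambda_2|$, i.e.\ $\eta$ is a supersolution of \eqref{condn1}; the reconstruction of $(f_1,f_2)$ from a supersolution works as you describe.

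The boundary half of your self-contained argument, however, is only asserted, and the gap is real. Writing $\rho=\Delta_2/\Delta_1$ and using $l_1=1$, $l_2=0$, condition \eqref{01condauxbords} becomes
\[
\max\Bigl(\tfrac{|k_1|}{\rho},\,|k_2|\rho\Bigr)\,\max\Bigl(f_1(0),\,\tfrac{f_2(1)}{\rho^2}\Bigr)\;<\;\min\Bigl(f_1(1),\,\tfrac{f_2(0)}{\rho^2}\Bigr),
\]
i.e.\ eight coupled inequalities in $\rho$, $f_1(0)$, $f_1(1)$, $f_2(0)=\eta(0)^2f_1(0)$, $f_2(1)=\eta(1)^2f_1(1)$. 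Two of them are $|k_1|\rho<\eta(0)^2$ and $|k_2|\eta(1)^2<\rho$, whose compatibility forces $|k_1||k_2|\eta(1)^2<\eta(0)^2$; with the choice $\eta(0)=|k_1|$ this is $|k_2|\eta(1)^2<|k_1|$, which (since $\eta$ is increasing) is \emph{strictly stronger} than the target $\eta(1)<|k_2|^{-1}$. So the naive construction ``take $\eta$ solving \eqref{condn1} and set $f_2=\eta^2f_1$'' does not directly verify \eqref{01condauxbords}, and the clean decoupling into a balance at $x=0$ giving $\eta(0)\ge|k_1|$ and a balance at $x=1$ giving $\eta(1)<|k_2|^{-1}$ cannot be read off termwise: one must genuinely exploit the remaining freedom (the value of $\eta(0)$, the parameter $\rho$, and the slack $2|a|/(\Lambda_1\eta)\le -f_1'/f_1\le 2\eta'/\eta-2|b|\eta/|\Lambda_2|$ that controls $f_1(1)/f_1(0)$ and $f_2(1)/f_2(0)$ separately) and run the comparison argument identifying $\eta(0)=|k_1|$ as extremal. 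You correctly flag this as the main obstacle, but as written it is a gap; to close it you must either carry out that case analysis in full or do what the paper does and cite \cite[Section 4]{C1_22}.
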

\subsection{Proof of Lemma \ref{prop2}}
\label{secprop2}
\textcolor{black}{
When $a$ and $b$ are constant, $\eta$ can be computed explicitly. Indeed, denoting $c_{1}=|a|/\Lambda_{1}$ and $c_{2}=|b|/|\Lambda_{2}|$, we have
\begin{equation}
\eta(x)=\sqrt{\frac{c_{1}}{c_{2}}}\tan(\atan(\sqrt{\frac{c_{2}}{c_{1}}}|k_{1}|)+\sqrt{c_{1}c_{2}}x),\text{ on $[0,x_{1})$},
\label{exprg}
\end{equation}
where $x_{1}$ is given by
\begin{equation}
x_{1}=\frac{\left(\pi/2-\atan(\sqrt{\frac{c_{2}}{c_{1}}}|k_{1}|)\right)}{\sqrt{c_{1}c_{2}}},
\end{equation}
and
\begin{equation}
\lim\limits_{x\rightarrow x_{1}} \eta(x) =+\infty.
\end{equation}
Therefore the existence of $\eta$ to \eqref{condn1} and \eqref{condn2} becomes
\begin{gather}
1<\frac{\left(\pi/2-\atan(\sqrt{\frac{c_{2}}{c_{1}}}|k_{1}|)\right)}{\sqrt{c_{1}c_{2}}}\\
k_{2}< \left(\sqrt{\frac{c_{1}}{c_{2}}}\tan(\atan(\sqrt{\frac{c_{2}}{c_{1}}}|k_{1}|)+\sqrt{c_{1}c_{2}})\right)^{-1}
\end{gather}
which is equivalent to \eqref{condprop2}. Together with Proposition \ref{prop22}, this ends the proof of Lemma \ref{prop2}.
}
\subsection{Proof of Proposition \ref{th2}}
In this subsection we prove \textcolor{black}{Proposition} \ref{th2}, by using Lemma \ref{prop2} and comparing the two conditions.
\begin{proof}[Proof of \textcolor{black}{Proposition} \ref{th2}]
\textcolor{black}{Assume that $a$ and $b$ are constant.} 
In this case, 
 $A=\textcolor{black}{|a|} e^{2K}$ and $B=\textcolor{black}{|b|}$. Thus, assuming that \eqref{cond1} holds, we have
\begin{equation}
\left(\sqrt{\frac{\exp(2K)-\exp(K)}{K}\frac{|b|}{|\Lambda_{2}|}}+\sqrt{|k_{2}|}\right)\left(\sqrt{\frac{\exp(2K)-\exp(K)}{K}\frac{|a|}{\Lambda_{1}}}+\sqrt{|k_{1}|}\right)<1,
\end{equation}
which implies in particular that
\begin{equation}
\left(\sqrt{\frac{|b|}{|\Lambda_{2}|}}+\sqrt{|k_{2}|}\right)\left(\sqrt{\frac{|a|}{\Lambda_{1}}}+\sqrt{|k_{1}|}\right)<1.
\label{cond20}
\end{equation}
\textcolor{black}{Denoting again} $c_{1}=|a|/\Lambda_{1}$ and $c_{2}=|b|/|\Lambda_{2}|$, we have
\begin{equation}
0\leq \sqrt{|k_{2}|}<\frac{1}{\sqrt{c_{1}}+\sqrt{|k_{1}|}}-\sqrt{c_{2}},
\label{estk2}
\end{equation}
and
\begin{equation}
0\leq \sqrt{|k_{1}|}<\frac{1}{\sqrt{c_{2}}+\sqrt{|k_{2}|}}-\sqrt{c_{1}}.
\label{estk1}
\end{equation}
From Lemma \ref{prop2}, it is enough to show that this implies that
\begin{equation}
x_{1}=\frac{\frac{\pi}{2}-\atan(\sqrt{\frac{c_{2}}{c_{1}}}|k_{1}|)}{\sqrt{c_{1}c_{2}}}>1,
\label{defetaunif}
\end{equation}
and that
\begin{equation}
k_{2}\leq \eta^{-1}(1),
\label{condk222}
\end{equation}
where $\eta(x)$ is defined again by 
\begin{equation}
\eta(x)=\sqrt{\frac{c_{1}}{c_{2}}}\tan(\atan(\sqrt{\frac{c_{2}}{c_{1}}}|k_{1}|)+\sqrt{c_{1}c_{2}}x),\text{ on $[0,x_{1})$}.
\end{equation}
\begin{itemize}
\item Proof that $x_{1}>1$. From \eqref{estk1},
\begin{equation}
\sqrt{\frac{\lvert k_{1}\rvert}{c_{1}}}<\left(\frac{1}{\sqrt{c_{1}c_{2}}}-1\right),
\label{condlongueur}
\end{equation}
which implies that
\begin{equation}
x_{1}=\frac{\left(\pi/2-\atan(\sqrt{\frac{c_{2}}{c_{1}}}|k_{1}|)\right)}{\sqrt{c_{1}c_{2}}}>\frac{\pi/2-\atan\left(\left(\frac{1}{(c_{1}c_{2})^{1/4}}-(c_{1}c_{2})^{1/4}\right)^{2}\right)}{\sqrt{c_{1}c_{2}}}.
\end{equation}
Note that from \eqref{condlongueur} $\sqrt{c_{1}c_{2}}<1$. Then, if $1>\sqrt{c_{1}c_{2}}\geq1/2$ then $x_{1}>\pi/2-\atan((2^{1/2}-1/2^{1/2})^2)>\pi/3>1$. If $1/3\leq\sqrt{c_{1}c_{2}}\leq1/2$ then $x_{1}>2(\pi/2-\atan((3^{1/2}-1/3^{1/2})^2))>2\pi/5>1$. It remains now only the case $\sqrt{c_{1}c_{2}}\leq1/3$. We use the two following facts for every $x>0$:
\begin{equation}
\begin{split}
\pi/2-\atan(x)&=\atan(1/x)\\
\atan(x)&\geq x-x^{3}/3
\end{split}
\end{equation}
Therefore we have
\begin{equation}
\begin{split}
x_{1}&>\frac{1}{\sqrt{c_{1}c_{2}}}\left(\frac{1}{((c_{1}c_{2})^{-1/4}-(c_{1}c_{2})^{1/4})^{2}}-\frac{1}{3((c_{1}c_{2})^{-1/4}-(c_{1}c_{2})^{1/4})^{6}}\right)\\
&=\left(\frac{1}{(1-(c_{1}c_{2})^{1/2})^{2}}-\frac{c_{1}c_{2}}{3(1-(c_{1}c_{2})^{1/2})^{6}}\right)
\end{split}
\end{equation}
Now, the function $y\rightarrow 1/(1-y)^{2}-y/(3(1-y)^{6})$ is strictly increasing then decreasing on $[0,1/3]$, thus one has
\begin{equation}
x_{1}>\max(1, 1/(1-1/3)^2-1/(9(1-1/3)^{6}))=1.
\end{equation}
Therefore in any cases $x_{1}>1$.

\item Proof that $|k_{2}|<\textcolor{black}{\eta^{-1}(1)}$\textcolor{black}{.}
As stated previously in \eqref{condk222} and using the definition of $g$ given by \eqref{defetaunif}, we only need to prove that
\begin{equation}
(\frac{1}{(\sqrt{c_{1}}+\sqrt{\lvert k_{1}\rvert})}-\sqrt{c_{2}})<\sqrt{\frac{\sqrt{\frac{c_{2}}{c_{1}}}}{\tan(\atan(\sqrt{\frac{c_{2}}{c_{1}}}|k_{1}|)+\sqrt{c_{1}c_{2}})}}.
\label{ck2}
\end{equation}
However, this could be rather tedious, thus we will look at an equivalent problem in order to bring ourselves in a similar setting as the proof that $x_{1}>1$. Suppose $|k_{2}|$ fixed and define $x_{2}\geq0$ such that
\begin{equation}
\textcolor{black}{\eta}(x_{2})=\frac{\tan(\atan(\sqrt{\frac{c_{2}}{c_{1}}}|k_{1}|)+\sqrt{c_{1}c_{2}}x_{2})}{\sqrt{\frac{c_{2}}{c_{1}}}}=|k_{2}|^{-1},
\label{g2}
\end{equation}
which is the limiting case for condition \eqref{condk222} to hold. Such $x_{2}$ exists and is positive as $\textcolor{black}{\eta}(0)=|k_{1}|<|k_{2}|^{-1}$ from \eqref{estk1},
and $\lim_{x\rightarrow x_{1}}\textcolor{black}{\eta}(x)=+\infty$.
Then we show that under the assumption \eqref{cond20}, we have $x_{2}>1$. As $g$ is strictly increasing this would give directly \eqref{condk222}, hence \eqref{condn2}. Thus it remains to show that $x_{2}>1$. From \eqref{g2} we have
\begin{equation}
\label{defx2}
x_{2}=\frac{\atan(\sqrt{\frac{c_{1}}{c_{2}}}|k_{2}|^{-1})-\atan(\sqrt{\frac{c_{2}}{c_{1}}}|k_{1}|)}{\sqrt{c_{1}c_{2}}}.
\end{equation}
From \eqref{estk1} we have
\begin{equation}
\sqrt{\sqrt{\frac{c_{2}}{c_{1}}}|k_{1}|}<\left(\frac{1}{(c_{2}c_{1})^{1/4}+\left(\frac{c_{1}}{c_{2}}\right)^{1/4}\sqrt{|k_{2}|}}-(c_{1}c_{2})^{1/4}\right).
\label{boundk1}
\end{equation}
Hence, using \eqref{boundk1} in \eqref{defx2}
\begin{equation}
x_{2}>\frac{\atan(\sqrt{\frac{c_{2}}{c_{1}}}|k_{2}|^{-1})-\atan\left(\left(\frac{1}{(c_{2}c_{1})^{1/4}+\left(\frac{c_{1}}{c_{2}}\right)^{1/4}\sqrt{|k_{2}|}}-(c_{1}c_{2})^{1/4}\right)^{2}\right)}{\sqrt{c_{1}c_{2}}}.
\label{defx22}
\end{equation}
We have an expression with a priori 3 parameters, $|k_{2}|$, $c_{1}$ and $c_{2}$. The first thing to realize is that, as previously, we can reduce it to 2 parameters by setting
$X:=\sqrt{c_{1}/c_{2}}|k_{2}|$ and $Y:=\sqrt{c_{1}c_{2}}$. Indeed, \eqref{defx22} becomes
\begin{equation}
x_{2}>\frac{\atan\left(\frac{1}{X}\right)-\atan\left(\left(\frac{1}{(Y)^{1/2}+\sqrt{X}}-Y^{1/2}\right)^{2}\right)}{Y},
\label{defx23}
\end{equation}
where $Y\in(0,1)$ and $X\in(0,(1/\sqrt{Y}-\sqrt{Y})^{2})$.
Observe however that we can in fact simplify again the expression with a new parametrization by setting $Z=\sqrt{X}+\sqrt{Y}$
such that $Z\in(\sqrt{Y},1/\sqrt{Y})$. Therefore \eqref{defx23} becomes
\begin{equation}
\begin{split}
x_{2}&>\frac{\atan\left(\frac{1}{(Z-\sqrt{Y})^{2}}\right)-\atan\left(\left(\frac{1}{Z}-Y^{1/2}\right)^{2}\right)}{Y}\\
&=\frac{\frac{\pi}{2}-\left[\atan\left((Z-\sqrt{Y})^{2}\right)+\atan\left(\left(\frac{1}{Z}-Y^{1/2}\right)^{2}\right)\right]}{Y}.
\end{split}
\label{defx24}
\end{equation}
We define the function $\phi:x\rightarrow \atan(a-x)-\atan(a)+x/(1+a^{2})$ where $a\geq x\geq0$. We have $\phi(0)=0$, and
\begin{equation}
\phi'(x)=-\frac{1}{1+(a-x)^{2}}+\frac{1}{1+a^{2}}\leq0,\text{  for  }x\in[0,a).
\end{equation}
Thus, $\phi(x)\leq 0$ on $[0,a]$ which implies that $\atan(a-x)\leq\atan(a)-x/(1+a^{2})$ for any $x\in[0,a]$. Using this in \eqref{defx24} gives
\begin{equation}
\begin{split}
x_{2}&>\frac{\frac{\pi}{2}-\left[\atan(Z)+\atan(Z^{-1})- \frac{(2\sqrt{Y}Z-Y)}{1+Z^{2}}-\frac{(2\sqrt{Y}Z^{-1}-Y)}{1+Z^{-2}}\right]}{Y}\\
&=\frac{\frac{\pi}{2}-\left[\frac{\pi}{2}- \frac{(2\sqrt{Y}Z-Y)}{1+Z^{2}}-\frac{(2\sqrt{Y}Z-YZ^{2})}{1+Z^{2}}\right]}{Y}\\
&=\frac{\left[\frac{(4\sqrt{Y}Z)}{1+Z^{2}}-Y\right]}{Y}\\
&=\left[\frac{4}{\sqrt{Y}}\left(\frac{Z}{1+Z^{2}}\right)-1\right].
\end{split}
\end{equation}
Then if we look at the function $l:x\rightarrow x/(1+x^{2})$, one has
\begin{equation}
l'(x)=\frac{1+x^{2}-2x^{2}}{(1+x^{2})^{2}}=\frac{1-x^{2}}{(1+x^{2})^{2}}.
\end{equation}
Thus, $l$ is increasing on $[0,1]$ and decreasing on $[1,+\infty)$ which implies that for any $Z\in(\sqrt{Y},1/\sqrt{Y})$
\begin{equation}
x_{2}>\frac{4}{\sqrt{Y}}\frac{\sqrt{Y}}{1+\sqrt{Y}^{2}}-1=\frac{4}{1+Y}-1.
\end{equation}
Thus, as $Y\in(0,1)$,
\begin{equation}
\begin{split}
x_{2}>1.
\end{split}
\label{endx2}
\end{equation}
\end{itemize}
This ends the proof of \textcolor{black}{Proposition} \ref{th2}.
\end{proof}
 \section*{Conclusion}
 In this paper we showed that the Lyapunov approach used to deal with the exponential stability of quasilinear hyperbolic systems can be adapted to the Input-to-State Stability in the $C^{q}$ norm, for any $q\geq 1$. The consequent sufficient conditions allow to derive explicit gains and lower bounds on the length of the interval such that ISS can be guaranteed. They also represent an improvement to the existing conditions for ISS of such systems.
 \section*{Acknowledgments}
 The authors would like to thank the ANR project Finite4SoS (No.ANR 15-CE23-0007), INRIA team CAGE, the NSF for support via the CPS Synergy project 
``Smoothing Traffic via Energy-efficient Autonomous Driving" (STEAD)
CNS 1837481 and the French Corps des IPEF for their financial support.
 \appendix
 \section{Proof of Theorem \ref{th1}}
 \label{inhomogeneous}
\textcolor{black}{When the system is inhomogeneous and $B\neq0$, one can define, similarly to \eqref{defW1p}--\eqref{defW2p}--\eqref{defWp},
\begin{gather}
W_{1,p}=\left(\int_{0}^{L}\sum\limits_{i=0}^{n}f_{i}^{p}(x)e^{-2p \mu s_{i} x}u_{i}^{2p}(t,x)dx\right)^{1/2p}\\
W_{2,p}=\left(\int_{0}^{L}\sum\limits_{i=0}^{n}f_{i}^{p}(x)e^{-2p \mu s_{i} x}(E(\mathbf{u},x)\partial_{t}\mathbf{u}(t,x))_{i}^{2p}dx\right)^{1/2p},\\
W_{p}=W_{1,p}+W_{2,p},
\label{defWpinhomo}
\end{gather}
where $f_{i}$ are now $C^{1}$ functions with value in $(0,+\infty)$ such that \textcolor{black}{\eqref{01cond111} and \eqref{01condauxbords} hold with strict inequalities}. The existence of such $f_i$'s follows from the assumptions of Theorem~\ref{th1} and the continuity of differential equations with respect to the right hand side (note that \eqref{01condauxbords} is a strict inequality). 
When differentiating $W_{1,p}$ along $C^{1}$ solutions, we get from \cite[(5.19)-(5.30)]{C1}, 
that there exist $\varepsilon_{1}>0$, $p_{1}\in\mathbb{N}^{*}$ and $\alpha_{0}>0$ a constant independent of $\mathbf{u}$ and $p$  such that for any $p\geq p_{1}$ and any positive $\varepsilon \leq \varepsilon_{1}$
\begin{equation}
\frac{d W_{1,p}}{dt}\leq-I_{2}-I_{3}-\frac{\mu\alpha_{0}}{2}W_{1,p}+CW_{1,p}\lVert\mathbf{u}\rVert_{C^{1}},
\label{dW1pinhomo}
\end{equation}
where $C>0$ is a positive constant independent of $p$ and $\mathbf{u}$, $I_{2}$ is still defined by \eqref{defI21} and $I_{3}$ is defined by
\begin{equation}
\begin{split}
I_{3}=&W_{1,p}^{1-2p}\int_{0}^{L}\sum\limits_{i=1}^{n}f_{i}^{p}(x)u_{i}^{2p-1}\left(\sum\limits_{k=1}^{n} M_{ik}u_{k}\right)e^{-2\mu s_{i}x}dx\\
&-\frac{W_{1,p}^{1-2p}}{2}\int_{0}^{L}\sum\limits_{i=1}^{n}\lambda_{i}(\mathbf{u},x)f_{i}^{p-1}(x)f_{i}'(x)u_{i}^{2p}e^{-2\mu s_{i}x}dx.
\end{split}
\end{equation}
From \textcolor{black}{\textcolor{black}{\cite[(5.35)-(5.38)]{C1}}, as \eqref{01cond111} holds with strict inequalities by assumption,
there exists $\varepsilon_{2}>0$} and $\mu_{1}>0$ such that \textcolor{black}{for any positive $\mu\leq \mu_{1}$, and any positive $\varepsilon\leq \varepsilon_{2}$,} $I_{3}>0$. Let us now look at $I_{2}$. The analysis has now to take into account that $f_{i}(L)\neq f_{i}(0)$ a priori. From \eqref{01condauxbords}, there exists $\alpha>0$ such that one has
\begin{equation}
(1+\alpha)\theta<\frac{\inf\limits_{i}\left(\frac{f_{i}(l_{i})}{\Delta^{2}_{i}}\right)}{\sup\limits_{i}\left(\frac{f_{i}(L-l_{i})}{\Delta^{2}_{i}}\right)}.
\label{defalphainhomo}
\end{equation}
And \eqref{I25b} becomes now
\begin{equation}
\begin{split}
&\sum\limits_{i=1}^{n}(|\Lambda_{i}(l_{i})|+O(\xi))\frac{f_{i}^{p}(l_{i})}{\Delta_{i}^{2p}}\xi_{i}^{2p}e^{-2p\mu l_{i}}
-\textcolor{black}{\sum\limits_{i=1}^{n}\left(\Lambda_{i}(L-l_{i})+O(|\xi|+d_{\max})\right)}\left(f_{i}^{p}(L-l_{i})e^{2p\mu (L-l_{i})}\right)(1+\alpha)^{2p}|F_{i}(\mathbf{\xi})|^{2p}\\
&\geq (\min\limits_{i}|\Lambda_{i}(l_{i})|+O(\xi))\min\limits_{i}\left(\frac{f_{i}(l_{i})}{\Delta_{i}^{2}}\right)^{p}\xi_{i_{0}}^{2p}e^{-2p\mu L}\\
&-n\textcolor{black}{\left(\sup\limits_{i}|\Lambda_{i}(L-l_{i})|+O(|\xi|+d_{\max})\right)}\sup\limits_{i}\left(\frac{f_{i}(L-l_{i})}{\Delta_{i}^{2}}\right)^{p}e^{2p\mu L}(1+\alpha)^{2p}
\left(\sum\limits_{j=1}^{n}|\textcolor{black}{J}_{i,j}|\frac{\Delta_{i}}{\Delta_{j}}\right)^{2p}\textcolor{black}{(1+o(1))^{2p}}|\mathbf{\xi}_{\textcolor{black}{i_{0}}}|^{2p}\\
&\geq
\left[(\min\limits_{i}|\Lambda_{i}(l_{i})|+O(\xi))\min\limits_{i}\left(\frac{f_{i}(l_{i})}{\Delta_{i}^{2}}\right)^{p}e^{-2p\mu L}\right.\\
&\left.-n\textcolor{black}{\left(\sup\limits_{i}|\Lambda_{i}(L-l_{i})|+O(|\xi|+d_{\max})\right)}\sup\limits_{i}\left(\frac{f_{i}(L-l_{i})}{\Delta_{i}^{2}}\right)^{p}e^{2p\mu L}(1+\alpha)^{2p}
\theta^{2p}\textcolor{black}{(1+o(1))^{2p}}\right]\mathbf{\xi}_{\textcolor{black}{i_{0}}}^{2p},
\label{I25binhomo}
\end{split}
\end{equation}
By definition of $\alpha$ and $o(1)$, there exist $\varepsilon_{4}>0$ and $\delta_{1}>0$ such that for any $\varepsilon\in(0,\varepsilon_{4})$ and $\delta\in(0,\delta_{1})$,
\begin{equation}
\begin{split}
&\textcolor{black}{n^{1/2p}\left(\sup\limits_{i}|\Lambda_{i}(L-l_{i})|+O(|\xi|+d_{\max})\right)^{1/2p}e^{\mu L}(1+\alpha)(1+o(1))\theta}\\
&<e^{-4\mu L}(\min\limits_{i}|\Lambda_{i}(l_{i})|+O(\xi))^{1/2p}\frac{\min\limits_{i}\left(\frac{f_{i}(l_{i})}{\Delta_{i}^{2}}\right)^{1/2}}{\sup\limits_{i}\left(\frac{f_{i}(L-l_{i})}{\Delta_{i}^{2}}\right)^{1/2}}.
\end{split}
\end{equation}
This implies that 
\begin{equation}
\begin{split}
I_{2}\geq \frac{W_{1,p}^{1-2p}}{2p}\left[
-\left(\textcolor{black}{\sum\limits_{i=1}^{n}(|\Lambda_{i}(L-l_{i})|+O(|\xi|+d_{\max}))}f_{i}^{p}(L-l_{i})e^{2p\mu (L-l_{i})}\right)\left(1+\frac{1}{\alpha}\right)^{2p}d_{\max}^{2p}(t)
\right].
\end{split}
\end{equation}
And recalling that 
$I_{3}>0$, we have
\begin{equation}
\begin{split}
\frac{d W_{1,p}}{dt}\leq&-\frac{\mu\alpha_{0}}{2}W_{1,p}+CW_{1,p}\lVert\mathbf{u}\rVert_{C^{1}}\\
&+\frac{W_{1,p}^{1-2p}}{2p}\left[
\left(\textcolor{black}{\sum\limits_{i=1}^{n}|\Lambda_{i}(L-l_{i})|}f_{i}^{p}(L-l_{i})e^{2p\mu (L-l_{i})}\right)\left(1+\frac{1}{\alpha}\right)^{2p}d_{\max}^{2p}(t)\left(1+\textcolor{black}{O(\lVert \mathbf{u}(t,\cdot)\rVert_{C^{0}}+d_{\max}})\right)\right].
\end{split}
\label{dW12pinhomo}
\end{equation}
The rest can be done similarly as previously to obtain the desired estimate 
\begin{equation}
\begin{split}
&\lVert \mathbf{u}(t,\cdot)\rVert_{C^{1}}\leq \frac{C_{\max}}{C_{\inf}}\lVert\mathbf{u}(s,\cdot)\rVert_{C^{1}}e^{-\mu (t-s)}\\
&+\frac{1}{C_{\inf}}\left(1+\frac{1}{\alpha}\right)\max\limits_{i\in\{1,...,n\}}\left(\sqrt{f_{i}(L-l_{i})}e^{\mu (L-l_{i})}\right)\left(\sup\limits_{\tau\in[s,t]}(\textcolor{black}{e^{-\mu(t-\tau)}}|d_{\max}(\tau)|)+\sup\limits_{\tau\in[s,t]}(\textcolor{black}{e^{-\mu(t-\tau)}}|d'_{\max}(\tau)|)\right).
\end{split}
\label{ISSth11}
\end{equation}
} 
\section{Adapting Theorem \ref{th1} for the $L^{\infty}$ norm in the semilinear case}
\label{semilinear}
If the system is semilinear, we just have to keep $W_{1,p}$ defined as previously, and ignore $W_{2,p}$ such that
\begin{equation}
W_{p}=W_{1,p}.
\end{equation}
When differentiating $W_{1,p}$ along $C^{2}$ solutions of \eqref{sys1}, \eqref{bound1}, we obtain this time
\begin{equation}
\frac{d W_{1,p}}{dt}\leq-I_{2}-I_{3}-\frac{\mu\alpha_{0}}{2}W_{1,p}+CW_{1,p}\lVert\mathbf{u}\rVert_{C^{0}}.
\end{equation}
The reason is that when differentiating once with respect to time along $C^{2}$ solutions, as $A(x)=\Lambda(x)$ and is diagonal,  we have
\begin{equation}
\frac{d W_{1,p}}{dt}=-W_{1,p}^{1-2p}\left(\int_{0}^{L}\sum\limits_{i=1}^{n}\Lambda_{i}f_{i}^{p}u_{i}^{2p-1}\partial_{x}u_{i}-\sum\limits_{i=1}^{n}\Lambda_{i}f_{i}^{p}u_{i}^{2p-1}\left(M(\mathbf{u},x)\mathbf{u}\right)_{i}\right),
\label{dW1linear}
\end{equation}
Thus the only nonlinear term is $M(\mathbf{u},x)\mathbf{u}\leq M(0,x)\mathbf{u}+C_{0}\lVert\mathbf{u}\rVert_{C^{0}}$, where $C_{0}$ is a constant depending only on the system, which explains that the only nonlinear corrections that appears in \eqref{dW1linear} involves $\lVert\mathbf{u}\rVert_{C^{0}}$ an not $\lVert\mathbf{u}\rVert_{C^{1}}$. Then one can deal with $I_{2}$ and $I_{3}$ exactly similarly as in the proof of \textcolor{black}{Theorem \ref{th10} and Theorem \ref{th1}} and we obtain instead of \eqref{dW1p2} \textcolor{black}{(resp. \eqref{dW12pinhomo} in the inhomogenous case)},
\begin{equation}
\begin{split}
\frac{d W_{1,p}}{dt}\leq&-\frac{\mu\alpha_{0}}{2}W_{1,p}+CW_{1,p}\lVert\mathbf{u}\rVert_{C^{0}}\\
&+\frac{W_{1,p}^{1-2p}}{2p}\left[
\left(\textcolor{black}{\sum\limits_{i=1}^{n}|\Lambda_{i}(L-l_{i})|}\textcolor{black}{f_{i}^{2p}(L-d_{i})}e^{2p\mu (L-l_{i})}\right)\left(1+\frac{1}{\alpha}\right)^{2p}d_{\max}^{2p}(t)\left(1+\textcolor{black}{O(\lVert \mathbf{u}(t,\cdot)\rVert_{C^{0}}+d_{\max}})\right)
\right].
\end{split}
\label{dW1p2linear}
\end{equation}
Now, assuming that \textcolor{black}{$\lVert\mathbf{u}_{0}\rVert_{C^{1}}\leq \varepsilon$, $\lVert \mathbf{d} \rVert_{C^{0}}\leq \varepsilon$, and from Theorem \ref{th0}, there exists $\varepsilon_{1}>0$ such that for any $\varepsilon\in (0,\varepsilon_{1})$
\begin{equation}
-\frac{\mu\alpha_{0}}{2}W_{1,p}+CW_{1,p}\lVert\mathbf{u}\rVert_{C^{0}}\leq -\frac{\mu\alpha_{0}}{4}W_{1,p}.
\end{equation}
The rest can be done identically as in \eqref{gronwall}--\eqref{ISSth1}.}
\section{Proof of Lemma \ref{lemW2p}}
\label{appendixW2p}
In this appendix we show how to adapt the proof of estimate \eqref{dW1p2} to obtain Lemma \ref{lemW2p}. \textcolor{black}{To avoid lengthening the article we prove it directly in the general case $B\neq0$.} As in \cite[(A.1)-(A.6) ]{C1}, by differentiating along the $C^{2}$ solutions of \eqref{sys1}, \eqref{bound1}, we get
\begin{equation}
\frac{dW_{2,p}}{dt}\leq -I_{21}-I_{31}-\left(\mu\alpha_{0}-\frac{C_{6}}{2p}\right)W_{2,p}+C_{5}W_{2,p}\lVert\mathbf{u}\rVert_{C^{1}},
\end{equation}
where $C_{5}$ and $C_{6}$ are constants that depend only on the system and
\begin{equation}
I_{21}=\frac{W_{2,p}^{1-2p}}{2p}\left[\sum\limits_{i=1}^{n}\lambda_{i}f_{i}^{p}(x)(E(\mathbf{u},x)\mathbf{u}_{t})_{i}^{2p}e^{-2p\mu s_{i}x}\right]_{0}^{L}
\end{equation}
and
\begin{equation}
\begin{split}
I_{31}=&W_{2,p}^{1-2p}\int_{0}^{L}\sum\limits_{i=1}^{n}f_{i}^{p}(x)(E\mathbf{u}_{t})_{i}^{2p-1}\left(\sum\limits_{k=1}^{n} R_{ik}(\mathbf{u},x)(E\mathbf{u}_{t})_{k}\right)e^{-2\mu s_{i}x}dx\\
&-\frac{W_{2,p}^{1-2p}}{2}\int_{0}^{L}\sum\limits_{i=1}^{n}\lambda_{i}(\mathbf{u},x)f_{i}^{p-1}(x)f_{i}'(x)(E\mathbf{u}_{t})_{i}^{2p}e^{-2\mu s_{i}x}dx,
\end{split}
\end{equation}
with $R=(R_{ij})_{(i,j)\in\{1,..,n\}}:=E(\mathbf{u},x)(D_{a}(\mathbf{u},x)+\frac{\partial B}{\partial\mathbf{u}})E^{-1}(\mathbf{u},x)$, and $D_{a}$ is the matrix with coefficients $\sum\limits_{k=1}^{n}\partial (A_{i,k}/\partial u_{j})(\mathbf{u}_{x})_{k}$. As previously in the proof of Theorem \ref{th1} dealing with $I_{31}$ can be done exactly as in \cite{C1} (see (A.7) to (A.9)). \textcolor{black}{Concerning $I_{21}$,} \textcolor{black}{from the definition of $E$, and the fact that $\lambda$ is $C^{0}$ we have 
\begin{equation}
\begin{split}
I_{21}=&\frac{W_{2,p}^{1-2p}}{2p}\left(\sum\limits_{i=1}^{n}(\Lambda_{i}+o(1))f_{i}^{p}(L)((\mathbf{u}_{t})_{i}(t,L)+o(|(\mathbf{u}_{t})(t,L)|))^{2p}e^{-2p\mu s_{i}L}\right.\\
&\left.-\sum\limits_{i=1}^{n}(\Lambda_{i}+o(1))f_{i}^{p}(0)((\mathbf{u}_{t})_{i}(t,0)+o(|(\mathbf{u}_{t})(t,0)|))^{2p}\right)
\end{split}
\end{equation}
where $o(x)$ refers to a function satisfying $o(x)/|x|\rightarrow 0$ when $\lVert\mathbf{u}\rVert_{C^{1}}$ tends to $0$. Then, differentiating \eqref{bound1}, we get
\begin{equation}
\begin{split}
\begin{pmatrix}
(\mathbf{u}_{t})_{+}(t,0)\\
(\mathbf{u}_{t})_{-}(t,L)
\end{pmatrix}&=G'\begin{pmatrix}
\mathbf{u}_{+}(t,L)\\
\mathbf{u}_{-}(t,0)
\end{pmatrix}\begin{pmatrix}
(\mathbf{u}_{t})_{+}(t,L)\\
(\mathbf{u}_{t})_{-}(t,0)
\end{pmatrix}+\mathbf{d'}(t)\\
&=(G'(0)+o(1))\begin{pmatrix}
(\mathbf{u}_{t})_{+}(t,L)\\
(\mathbf{u}_{t})_{-}(t,0)
\end{pmatrix}+\mathbf{d'}(t).
\label{bound12}
\end{split}
\end{equation}
Thus, defining $\mathbf{\xi}=(\xi_{1},...,\xi_{n})^{T}$ by
\begin{equation}
\xi_{i}=\left\{\begin{array}{l}
\Delta_{i} (\mathbf{u}_{t})_{i}(t,L)\text{ for }i\in[1,m],\\
\Delta_{i} (\mathbf{u}_{t})_{i}(t,0)\text{ for }i\in[m+1,n],
\end{array}\right.
\label{defxi2}
\end{equation}
and using \eqref{bound12} we have
\begin{equation}
\begin{split}
I_{21}=&\frac{W_{2,p}^{1-2p}}{2p}\left(\sum\limits_{i=1}^{m}(\Lambda_{i}(L)+o(1))\frac{f_{i}^{p}(L)}{\Delta_{i}}(\xi_{i}+o(|\xi|))^{2p}e^{-2p\mu s_{i}L}
+\sum\limits_{i=m+1}^{n}(|\Lambda_{i}(0)|+o(1))\frac{f_{i}^{p}(0)}{\Delta_{i}}(\xi_{i}(t,L)+o(|\xi|))^{2p}\right.\\
&\left.-\sum\limits_{i=1}^{n}(\Lambda_{i}(L-l_{i})+o(1))f_{i}^{p}(L-l_{i})e^{-2p\mu s_{i}(L-l_{i})}
(F_{2,i}(\xi)+o(|\xi|+d'_{\max}))^{2p}\right),
\end{split}
\end{equation}
where $d'_{\max}$ and $l_{i}$ are defined as previously and $F_{2}(\xi)$ is defined by
\begin{equation}
F_{2}(\xi)=G'\begin{pmatrix}
\mathbf{u}_{+}(t,L)\\
\mathbf{u}_{-}(t,0)
\end{pmatrix}\begin{pmatrix}
(\mathbf{u}_{t})_{+}(t,L)\\
(\mathbf{u}_{t})_{-}(t,0)
\end{pmatrix},
\end{equation}
as the right hand side is only a function of $\xi$ from \eqref{defxi2}. Therefore
\begin{equation}
F_{2,i}(\xi)=\sum\limits_{j=1}^{n}J_{i,j}\frac{\xi_{j}}{\Delta_{j}}+o(\xi),
\end{equation}
which is the analogous of \eqref{devlim}
}
\textcolor{black}{The rest can be done similarly as previously.}
\section{Extension of the proof of Theorems \ref{th10} and \ref{th1} to the $C^{q}$ norm}
\label{Cpnorm}
In order to extend the proof to the $C^{q}$ norm we consider the state $\mathbf{y}=(\mathbf{u},\partial_{t}\mathbf{u},...,\partial_{t}^{q-1}\mathbf{u})$. One can see that $\mathbf{y}$ is still solution of a quasilinear system of the form
\begin{equation}
\partial_{t}\mathbf{y}+\textcolor{black}{A_{1}}(\mathbf{y},x)\partial_{x}\mathbf{y}+\textcolor{black}{B_{1}}(\mathbf{y},x)=0,
\end{equation}
where $\textcolor{black}{A_{1}}$ is block diagonal as follows
\begin{equation}
A_{1} = \begin{pmatrix} A(\mathbf{u},x) & (0) & ...\\
(0) & A(\mathbf{u},x) & (0) & ...\\
(0)& (0) & A(\mathbf{u},x) & ...\\
... & ... & ... & ...
\end{pmatrix}
\end{equation}
and $M_{1}(x):=\partial_{y}B(0,x)$ is also block diagonal with blocks that are all $M(x)$. Thus we can define again
\begin{equation}
 W_{k+1,p}=\left(\int_{0}^{L} \sum\limits_{i=1}^{n}   f_{i}(x)^{p}e^{-2p \mu s_{i} x} (\textcolor{black}{E\partial_{t}^{k}\mathbf{u}})_{j}^{2p} e^{-2p\mu s_{i} x} dx \right)^{1/2p},
 \label{Wkpappendix}
 \end{equation}
 and consider $W_{p}=\sum\limits_{k=0}^{q} W_{k+1,p}$. The rest can be done is a similar way as previously.

\section{Adding internal disturbances}
\label{internal}
In this Appendix we show how to extend the results when there are internal disturbances as well in the system (see Remark \ref{rmk-intern-disturb}). \textcolor{black}{For simplicity, we deal with the homogeneous case when $B = 0$, even though the same could be done with the general inhomogeneous case}. If additional internal disturbances are included in the system , then, 
\textcolor{black}{the system becomes
\begin{equation}
\partial_{t}\mathbf{u}+A(\mathbf{u},x)\partial_{x}\mathbf{u} 
=\mathbf{d}_{2}(t,x), \;\;\; t \in [0, +\infty), \;\; x \in [0,L].
\label{sys1disturb}
\end{equation}
This implies a few changes in the Lyapunov stability analysis. For any $q\in\mathbb{N}^{*}$ we can define $W_{k+1,p}$ for $k\in\{1,...,q\}$ as in \eqref{Wkpappendix}. However, now, for a $C^{q+1}$ solution $\mathbf{u}$ to \eqref{sys1disturb} with boundary conditions \eqref{bound1}, an important difference occurs. One has
\begin{equation}
\begin{split}
\partial_{t}\mathbf{u}&=-A(\mathbf{u},x)\partial_{x}\mathbf{u}
+\mathbf{d}_{2}(t,x)\\
\partial_{t}^{2}\mathbf{u}&=A^{2}(\mathbf{u},x)\partial_{x}^{2}\mathbf{u}+A(\mathbf{u},x)\partial_{x}(A(\mathbf{u},x))\partial_{x}\mathbf{u}
+A(\mathbf{u},x)\partial_{x}\mathbf{d}_{2}(t,x)+\partial_{t}\mathbf{d}_{2}(t,x)\\
\partial_{t}^{k}\mathbf{u}&=(-1)^{k}A^{k}(\mathbf{u},x)\partial_{x}^{k}\mathbf{u}
+O\left(\left(\sum\limits_{i=0}^{k-1}|\partial_{x}^{i}\mathbf{u}|\right)^{2} +\sum\limits_{i_1+i_2\leq k-1}|\partial_{x}^{i_{1}}\partial_{t}^{i_{2}}\mathbf{d}_{2}(t,x)|\right),
\label{derivinternal}
\end{split}
\end{equation}
for $k\in\{1,...,q\}$, where $O(x)$ refers to a function such that $O(x)/|x|$ is bounded when $x\rightarrow 0$. 
Because of $\mathbf{d}_{2}$ and its derivatives, it could be that $\partial_{t}^{k}\mathbf{u}=0$ for any $k\in\{1,...,q\}$ while there exists $k\in\{1,...,q\}$ such that $\partial_{x}^{k}\mathbf{u}\neq 0$. Therefore, the Lyapunov function candidate we previously used, i.e. $V:=\lim_{p\rightarrow+\infty} \sum_{k=0}^{q}W_{k+1,p}$, is not equivalent anymore to the $C^{q}$ norm (recall that the $C^{q}$ norm is taken with respect to the $x$ derivatives and $W_{k+1,p}$ is given in \eqref{Wkpappendix}). To remedy this problem we define
\begin{equation}
V_{p}=\sum_{k=0}^{q}W_{k+1,p}
+\sum\limits_{k_1+k_2\leq q-1}\left(\int_{0}^{L}|\partial_{t}^{k_1}\partial_{x}^{k_2}\mathbf{d}_{2}(t,x)|^{2p}dx\right)^{1/2p}.
\label{defVpdist}
\end{equation}
In this case, our Lyapunov function candidate is now $V:=\lim\limits_{p\rightarrow +\infty} V_{p}$. Therefore, \textcolor{black}{from \eqref{Wkpappendix} and \eqref{derivinternal}, there exist $C_{\min}$ and $C_{\max}$ such that }
\begin{equation}
\begin{split}
&C_{\min}\left(\lVert\mathbf{u}\rVert_{C^{q}}
+\sum\limits_{k_1+k_2\leq q-1}\sup_{x\in[0,L]}\left|\partial_{t}^{k_1}\partial_{x}^{k_2}\mathbf{d}_{2}(t,x)\right|\right)\\
& \leq V\leq C_{\max} \left(
\lVert\mathbf{u}\rVert_{C^{q}}
+\sum\limits_{k_1+k_2\leq q-1}\sup_{x\in[0,L]}\left|\partial_{t}^{k_1}\partial_{x}^{k_2}\mathbf{d}_{2}(t,x)\right|\right).
 \label{equivdist}
 \end{split}
\end{equation}
It suffices now to obtain an ISS estimate on $W_{p}=\sum_{k=0}^{q}W_{k+1,p}$. Indeed, if there exist $p_{1}>0$ and $C>0$ independent of $\mathbf{u}$, $p$ and the disturbances such that for any $p\geq p_{1}$}
\textcolor{black}{\begin{equation}
W_{p}(t)\leq W_{p}(0)e^{-\gamma t}+C\sum\limits_{k=0}^{q}\left(\int_{0}^{t}e^{-2p\gamma(t-\tau)} |\mathbf{d}^{(k)}(\tau)|^{2p} d\tau\right)^{1/2p}+C\sum\limits_{k=0}^{q}\left(\int_{0}^{t}\int_{0}^{L}e^{-2p\gamma (t-\tau)}|\partial_{t}^{k}\mathbf{d}_{2}(\tau,x)|^{2p}dxdt\right)^{1/2p},
\label{estimateWpinternal}
\end{equation}
}
\textcolor{black}{
then, from \eqref{defVpdist},
\begin{equation}
\begin{split}
V_{p}(t)&\leq W_{p}(0)e^{-\gamma t}+C\sum\limits_{k=0}^{q} \left(\int_{0}^{t}e^{-2p\gamma(t-\tau)}|\mathbf{d}^{(k)}(\tau)|^{2p} d\tau\right)^{1/2p}+C\sum\limits_{k=0}^{q}\left(\int_{0}^{t}\int_{0}^{L}e^{-2p\gamma (t-\tau)}|\partial_{t}^{k}\mathbf{d}_{2}(\tau,x)|^{2p}dxdt\right)^{1/2p}\\
&
+\sum\limits_{k_1+k_2\leq q-1}\left(\int_{0}^{L}|\partial_{t}^{k_1}\partial_{x}^{k_2}\mathbf{d}_{2}(t,x)|^{2p}dx\right)^{1/2p},
\end{split}
\end{equation}
which, letting $p$ tend to $+\infty$, implies that
\begin{equation}
\begin{split}
V(t)\leq& V(0)e^{-\gamma t}
+C\sum\limits_{k=0}^{q}\sup\limits_{\tau\in[0,t]} |e^{-\gamma(t-\tau)}\mathbf{d}^{(k)}(\tau)|+C\sum\limits_{k=0}^{q}\sup_{(\tau,x)\in[0,t] \times [0,L]}|e^{-\gamma(t-\tau)}\partial_{t}^{k}\mathbf{d}_{2}(\tau,x)|\\
&
+\sum\limits_{k_1+k_2\leq q-1}\sup\limits_{x\in [0,L]}|\partial_{t}^{k_1}\partial_{x}^{k_2}\mathbf{d}_{2}(t,x)|dx,
\end{split}
\end{equation}
which implies, from \eqref{equivdist},
\begin{equation}
\begin{split}
C_{\min} \lVert \mathbf{u}\rVert_{C^{q}}\leq& C_{\max}
\lVert\mathbf{u}\rVert_{C^{q}}e^{-\gamma t}
+C\sum\limits_{k=0}^{q}\sup\limits_{\tau\in[0,t]} |e^{-2p\gamma(t-\tau)}\mathbf{d}^{(k)}(\tau)|\\
&+(C+C_{\max}+1)\left(\sup_{(\tau,x)\in[0,t] \times [0,L]}|e^{-\gamma(t-\tau)}\partial_{t}^{q}\mathbf{d}_{2}(\tau,x)|\right.\\
&\left.+\sum\limits_{k_1+k_2\leq q-1}\sup_{(\tau,x)\in[0,t] \times [0,L]}|e^{-\gamma(t-\tau)}\partial_{t}^{k_{1}}\partial_{x}^{k_{2}}\mathbf{d}_{2}(\tau,x)|\right),
\end{split}
\end{equation}
which gives} the desired ISS estimate \eqref{ISSdist}.
\textcolor{black}{It remains now only to proceed as previously for $W_{p}$ to obtain \eqref{estimateWpinternal}. When differentiating $W_{1,p}$ the only difference comes from the following additional term that} appears  in \eqref{dW1p},
\begin{equation}
I_{\textcolor{black}{5}}=-W_{1,p}^{1-2p}\int_{0}^{L}\sum\limits_{i=0}^{n}f_{i}^{p}(x)u_{i}^{2p-1}(t,x)d_{2,i}(t,x)dx,
\end{equation}
where $d_{2,i}(t,x)$ are internal disturbances. From there, using Young's inequality we get
\begin{equation}
I_{\textcolor{black}{5}}\geq W_{1,p}^{1-2p}\int_{0}^{L}\sum\limits_{i=0}^{n}f_{i}^{p}(x)\left(\frac{2p-1}{2p}u_{i}^{2p}(t,x)+\frac{1}{2p}d_{2,i}^{2p}(t,x)\right)dx=\frac{\mu\alpha_{0}}{8}\frac{2p-1}{2p}W_{1,p}
+\frac{W_{1,p}^{1-2p}}{2p}\left(\frac{8}{\mu\alpha_{0}}\right)^{2p-1}D_{1,p}^{2p},
\end{equation}
where 
\begin{equation}
D_{1,p}=\left(\int_{0}^{L}\sum\limits_{i=0}^{n}f_{i}^{p}d_{2,i}^{2p}(t,x)dx\right)^{1/2p}.
\end{equation}
and therefore \eqref{dWp} becomes
\begin{equation}
\begin{split}
\frac{dW_{p}}{dt} &\leq-\frac{\mu\alpha_{0}}{8}W_{p}\\
&+\frac{W_{p}^{1-2p}}{2p}\left[
I_{4}\left(1+\frac{1}{\alpha}\right)^{2p}(d_{\max}^{2p}(t)+(d'_{\max}(t))^{2p})\left(1+O(\lVert \mathbf{u}(t,\cdot)\rVert_{C^{1}})\right)\right.\\
&\left.+\left(\frac{8}{\mu\alpha_{0}}\right)^{2p-1}D_{1,p}^{2p})\right].
\end{split}
\end{equation}
The rest can be done similarly \textcolor{black}{to get \eqref{estimateWpinternal}}.
\section{Converse of \textcolor{black}{Proposition} \ref{th2} does not hold if $a\neq0$ or $b\neq0$}
In this section we show that \textcolor{black}{Proposition} \ref{th2} is a strict implication when $a\neq 0$ or $b\neq 0$. Let $a$ and $b$ be such that 
\begin{equation}
\left|\frac{ab}{\Lambda_{1}\Lambda_{2}}\right|\leq\frac{\pi}{2}.
\end{equation}
Let $k_{1}$ satisfy the first condition of \eqref{condprop2} , and let
$\varepsilon>0$ sufficiently small to be determined later on, and define
\begin{equation}
k_{2}=\eta^{-1}(1)-\varepsilon,
\end{equation}
where $\eta$ is given by \eqref{defetaunif}.
From Proposition \ref{prop22}, the conditions \eqref{01cond111}--\eqref{01condauxbords} of Theorem \ref{th1} are satisfied. We will now show that for $\varepsilon$ small enough, condition \eqref{cond1} is not satisfied. Let assume by contradiction that \eqref{cond1} is satisfied. Then $1/(\sqrt{c_{1}}+\sqrt{|k_{1}|})-\sqrt{c_{2}}> \sqrt{|k_{2}|}$, and by continuity
there exists $k_{0}>|k_{2}|$  such that
\begin{equation}
\sqrt{k_{0}}<\left(\frac{1}{\sqrt{c_{1}}+\sqrt{|k_{1}|}}-\sqrt{c_{2}}\right).
\label{defk0}
\end{equation}
Now,  we define $x_{0}$ such that $\eta(x_{0})=k_{0}^{-1}$, which is possible as $\eta(0)=k_{1}<k_{0}^{-1}$. From \eqref{defk0}, $\eta$ is strictly increasing and goes to $+\infty$ in finite time. Proceeding as previously in \eqref{g2}--\eqref{endx2} (note that $k_{0}$ here satisfies the same assumption as $k_{2}$ in the proof of Proposition \ref{th2}), we have $x_{0}>1$. As $x_{0}$ and $k_{0}$ do not depend on $\varepsilon$, and as $\eta$ is strictly increasing, we can choose $\varepsilon>0$ small enough such that 
\begin{equation}
k_{2}^{-1}=\frac{\eta(1)}{1-\varepsilon\eta(1)}<\eta(x_{0})= k_{0}^{-1}.
\end{equation}
Thus $k_{0}<k_{2}$. But, by definition, $k_{0}>k_{2}$ so we have a contradiction and \eqref{cond1} is not satisfied. 
\label{Appconverse}

\section{Proof of Proposition \ref{proplimit}}
In this section $k_{1}=k_{2}=0$ and we assume the existence of $K>0$ such that \eqref{cond1} holds. 
We will show that conditions \eqref{01cond111}--\eqref{01condauxbords} of Theorem \ref{th1} hold (for $k_{1}=k_{2}=0$). From \eqref{cond1} we have
\begin{equation}
AB\left(\frac{e^{K}-1}{K}\right)^{2}<|\Lambda_{1}\Lambda_{2}|.
\label{cond1l}
\end{equation}
\textcolor{black}{Define $\eta$ as the maximal solution of \eqref{condn1} with $\eta(0)=0$ and $\eta_{K}=\eta e^{2Kx}$. From Cauchy-Lipschitz Theorem $\eta$ is defined on $[0,x_{4})$ and $x_{4}=+\infty$ or $\lim_{x\rightarrow x_{4}}\eta(x)=+\infty$.
From Proposition \ref{prop22}, we only need to show that $x_{4}>1$.
Using \eqref{condn1}, we have
\begin{equation}
\begin{split}
\eta_{K}'&=\eta'e^{2Kx}+2K\eta e^{2Kx}=|\frac{a(x)}{\Lambda_{1}}e^{2Kx}|+|\frac{b(x)}{|\Lambda_{2}|}e^{-2Kx}|\eta_{K}^{2}+2K\eta_{K},\\
\eta_{K}(0)&=0.\\
\end{split}
\end{equation}
This was done to make  $|a(x)e^{2Kx}|$ and $|b(x)e^{-2Kx}|$ appear, whose maxima on $[0,L]$ are respectively given by $A$ and $B$. Thus if we define $h$ as the maximal solution of
\begin{equation}
\begin{split}
&h'=\textcolor{black}{A_{1}}+\textcolor{black}{B_{1}}h^{2}+2Kh,\\
&h(0)=0,
\end{split}
\label{defh}
\end{equation}
\textcolor{black}{where $\textcolor{black}{A_{1}}=A/\Lambda_{1}$ and $\textcolor{black}{B_{1}}= B/|\Lambda_{2}|$,} and $[0,x_{5})$ its maximal domain of definition,
by comparison \cite{Hartman} one has $0\leq \eta_{K}(x)\leq h(x)$ on $[0,x_{5})$ and in particular $x_{4}\geq x_{5}$. We will now show that $x_{5}>1$. If $B_{1}=0$, then $x_{5}=+\infty$ as the equation is linear, so we can restrict ourselves to the case $B_{1}> 0$. Equation \eqref{defh} can be solved and we have, if $\sqrt{A_{1}B_{1}}>K^{2}$, then
\begin{equation}
h(x)=\frac{\sqrt{\textcolor{black}{A_{1}}\textcolor{black}{B_{1}}-K^{2}}\tan\left(\atan\left(\frac{K}{\sqrt{\textcolor{black}{A_{1}}\textcolor{black}{B_{1}}-K^{2}}}\right)+x\sqrt{\textcolor{black}{A_{1}}\textcolor{black}{B_{1}}-K^{2}}\right)-K}{\textcolor{black}{B_{1}}}.
\end{equation}
and
\begin{equation}
x_{5}=\frac{1}{\sqrt{A_{1}B_{1}-K^{2}}}\left[\frac{\pi}{2}-\atan\left(\frac{K}{\sqrt{\textcolor{black}{A_{1}}\textcolor{black}{B_{1}}-K^{2}}}\right)\right]=
\frac{\atan\left(\frac{\sqrt{\textcolor{black}{A_{1}}\textcolor{black}{B_{1}}-K^{2}}}{K}\right)}{\sqrt{\textcolor{black}{A_{1}}\textcolor{black}{B_{1}}-K^{2}}}.
\end{equation}
If we look at the function $r:K\rightarrow \atan(\sqrt{A_{1}B_{1}-K^{2}}/K)-\sqrt{A_{1}B_{1}-K^{2}}$, we have
\begin{equation}
\begin{split}
r'(K)&=\frac{(\sqrt{A_{1}B_{1}-K^{2}})'K-\sqrt{A_{1}B_{1}-K^{2}}}{K^{2}+\left(\sqrt{A_{1}B_{1}-K^{2}}\right)^{2}}-(\sqrt{A_{1}B_{1}-K^{2}})'\\
&=\frac{(\sqrt{A_{1}B_{1}-K^{2}})'(K-K^{2})-\sqrt{A_{1}B_{1}-K^{2}}(1+(\sqrt{A_{1}B_{1}-K^{2}})'(\sqrt{A_{1}B_{1}-K^{2}}))}{K^{2}+\left(\sqrt{A_{1}B_{1}-K^{2}}\right)^{2}}\\
&=\frac{(\sqrt{A_{1}B_{1}-K^{2}})'(K-K^{2})-\sqrt{A_{1}B_{1}-K^{2}}(1-K)}{K^{2}+\left(\sqrt{A_{1}B_{1}-K^{2}}\right)^{2}},
\end{split}
\end{equation}
where $(\sqrt{A_{1}B_{1}-K^{2}})'$ denotes the derivative with respect to $K$. As $K<\sqrt{A_{1}B_{1}}<1$ and $(\sqrt{A_{1}B_{1}-K^{2}})'<0$, we have $r'(K)<0$ for $K\in (0,\sqrt{A_{1}B_{1}})$. 
And as $r(\sqrt{A_{1}B_{1}})=0$, this implies that for any $K\in (0,\sqrt{A_{1}B_{1}})$, $r(K)\geq0$ and in particular $x_{5}>1$. Thus $x_{4}>1$, and $\eta$ exists on $[0,1]$. This ends the proof of Proposition \ref{proplimit} in the case $K< \sqrt{A_{1}B_{1}}$. If $K> \sqrt{A_{1}B_{1}}$, then 
\begin{equation}
h(x)=\frac{\sqrt{K^{2}-A_{1}B_{1}}A_{1}\sinh(\sqrt{K^{2}-A_{1}B_{1}}x)}{(K^{2}-A_{1}B_{1})\cosh(\sqrt{K^{2}-A_{1}B_{1}}x)-K\sqrt{K^{2}-A_{1}B_{1}}\sinh(\sqrt{K^{2}-A_{1}B_{1}}x)},
\label{exprh2}
\end{equation}
and
\begin{equation}
x_{5}=\frac{\text{atanh}\left(\frac{\sqrt{K^{2}-A_{1}B_{1}}}{K}\right)}{\sqrt{K^{2}-A_{1}B_{1}}}.
\end{equation}
We define $\textcolor{black}{\phi} :X\rightarrow \text{atanh}\left(\frac{X}{K}\right)-X$, one has
\begin{equation}
\textcolor{black}{\phi}'(X)=\frac{K}{K^{2}-X^{2}}-1=\frac{X^{2}-(K^{2}-K)}{K^{2}-X^{2}}.
\end{equation}
This implies that if $K<1$, $\textcolor{black}{\phi}$ is increasing for $X\in [0,K)$ and if $K\geq 1$, $\textcolor{black}{\phi}$ is increasing for $X\in[\sqrt{K^{2}-K},K)$. As $\textcolor{black}{\phi}(0)=0$, we deduce that if $K<1$, as $\sqrt{K^{2}-A_{1}B_{1}}> 0$, $x_{5}>1$. If $K\geq 1$, then $A_{1}B_{1}<1<K$, thus $\sqrt{K^{2}-A_{1}B_{1}}>\sqrt{K^{2}-1}\geq\sqrt{K^{2}-K}$. Therefore,
\begin{equation}
x_{5}>\frac{\text{atanh}\left(\sqrt{1-\frac{1}{K^{2}}}\right)}{\sqrt{K^{2}-1}}.
\label{ineqx5}
\end{equation}
Now, let $r_{2}: K\rightarrow \text{atanh}\left(\sqrt{1-\frac{1}{K^{2}}}\right)-\sqrt{K^{2}-1}$. As previously for $r$ we have
\begin{equation}
r_{2}'(K)=\frac{-\sqrt{K^{2}-1}(K^{2}-K)-\frac{K}{\sqrt{K^{2}-1}}(K^{2}-K)}{K^{2}-(\sqrt{K^{2}-1})^{2}}\geq 0,\text{  for  }K>1.
\end{equation}
And $r_{2}(1)=0$, thus for any $K\geq 1$, $r_{2}(K)\geq 0$ and from \eqref{ineqx5} $x_{5}>1$. 
Finally if $K^{2}=A_{1}B_{1}$, then the expression \eqref{exprh2} does not hold anymore but $A_{1}B_{1}>0$ and \eqref{defh} becomes
\begin{equation}
\begin{split}
h'&=(\sqrt{A_{1}}+\sqrt{B_{1}}h)^{2},\\
h(0)&=0
\end{split}
\end{equation}
thus
\begin{equation}
(\sqrt{A_{1}}+\sqrt{B_{1}}h(x))=\frac{\sqrt{A_{1}}}{1-\sqrt{A_{1}B_{1}}x}.
\end{equation}
and $x_{5}=\sqrt{A_{1}B_{1}}^{-1}>1$. This ends the proof of 
Proposition \ref{proplimit}.\\
}
\label{limitlength}
\bibliographystyle{plain}
\bibliography{Biblio_ISS}
\end{document}